\title{Optimal transportation with infinitely many marginals\footnote{The author was supported in part by a University of Alberta start-up grant.}}
\author{Brendan Pass\footnote{Department of Mathematical and Statistical Sciences, 632 CAB, University of Alberta, Edmonton, Alberta, Canada, T6G 2G1 pass@ualberta.ca.}}
\begin{document}
\maketitle
\begin{abstract}
We formulate and study an optimal transportation problem with infinitely many marginals; this is a natural extension of the multi-marginal problem studied by Gangbo and Swiech \cite{GS}.  We prove results on the existence, uniqueness and characterization of the optimizer, which are natural extensions of the results in \cite{GS}.  The proof relies on a relationship between this problem and the problem of finding barycenters in the Wasserstein space, a connection first observed for finitely many marginals by Agueh and Carlier \cite{ac}. 
\end{abstract}
\section{Introduction}
In this paper, we study an optimal transportation problem with infinitely many marginals.  

Optimal transportation with two marginals is an exciting and fast moving area of research.  The general goal is to couple two probability measures together as efficiently as possible, relative to a given cost function.  More precisely, given measures $\mu_1$ and $\mu_2$ (called \textit{marginals}) on topological spaces $M_1$ and $M_2$, respectively, and a cost function $c: M_1 \times M_2 \rightarrow \mathbb{R}$, the aim is to find the measure $\gamma$ on $M_1 \times M_2$ which projects to $\mu_1$ and $\mu_2$ and minimizes the total transportation cost:

\begin{equation*}
\int_{M_1 \times M_2} c(x_1,x_2) d\gamma
\end{equation*}

Equivalently, one can formulate this problem using more probabilistic language.  Here one looks for an $M_1 \times M_2$ valued random variable $(X_1,X_2)$, such that law$X_i =\mu_i$, for $i=1,2$, which minimizes the expectation:

\begin{equation*}
E [c(X_1,X_2)]
\end{equation*}
Results about the existence, uniqueness and structure of the optimal measure $\gamma$ have been proven for a wide class of cost functions and marginals; for a detailed review, see the monograph of Villani \cite{V2}.  A central theme is that, under certain conditions on the cost and the measures, there is a unique optimal measure $\gamma$, concentrated on the graph of a function, $x_2 = F(x_1)$; this was first proven for the quadratic cost $c(x_1,x_2)=|x_1-x_2|^2$ on $M_1=M_2=\mathbb{R}^n$ by Brenier \cite{bren} and was generalized to a large class of cost functions by Gangbo \cite{G}, Gangbo and McCann \cite{GM}, Caffarelli \cite{Caf}, McCann \cite{m} and Levin \cite{lev}.  In probabilistic terms, this means that the random variables $(X_1,X_2)$ are completely dependent. 

In recent years, optimal transportation problems with several marginals have started to attract more attention; this is a natural generalization of the preceding problem.  Give $m$ probability measures $\mu_1,\mu_2,...,\mu_m$ on topological spaces $M_1,M_2,...,M_m$, and a cost function $c: M_1\times M_2 \times ...\times M_m \rightarrow \mathbb{R}$, we look for the measure $\gamma$ on the product $M_1\times M_2 \times ...\times M_m$ which projects to the $\mu_i$ respectively, and minimizes

\begin{equation*}
\int_{M_1 \times M_2 \times ...\times M_m} c(x_1,x_2,...,x_m) d\gamma
\end{equation*}

As in the two marginal case, this problem may be formulated probabilistically.  In this setting, one looks for an $M_1\times M_2 \times ...\times M_m$ valued random variable $(X_1,X_2,...,X_m)$, such that law$(X_i)=\mu_i$ for $i=1,2,...,m$, minimizing 

\begin{equation*}
E [c(X_1,X_2,...,X_m)]
\end{equation*}

In contrast to the two marginal case, results concerning the structure of the optimal measure for $m>2$ are rather scarce.  However, Gangbo and Swiech proved that for the cost function $c(x_1,x_2,...x_m) = \sum_{i=1}^m\sum_{j=1}^m|x_i-x_j|^2$ on $M_i =\mathbb{R}^n$, the Kantorovich problem admits a unique solution which is concentrated on the graph of a function over the first marginal, generalizing Brenier's theorem \cite{GS}; see also \cite{OR}\cite{KS}\cite{RU} and \cite{RU2}.  As in the two marginal case, this means that the random variables $(X_1,X_2,...,X_m)$ are completely dependent.  Since then, a handful of results have been proven on the structure of solutions for different cost functions by Heinich \cite{H}, Carlier \cite{C}, Carlier and Nazaret \cite{CN} and the present author \cite{P}\cite{P1}.  Applications for multi-marginal optimal transportation have also arisen in mathematical economics \cite{ce}\cite{cmn} and condensed matter physics \cite{cfk}\cite{cfk2}.

Our goal in the present article is to study this problem in the limit as $m \rightarrow \infty$, restricting our attention to a cost function reminiscent of that of Gangbo and Swiech.  More precisely, we will prescribe a \textit{continuum} of probability measures $\mu_t$ on $\mathbb{R}^n$, for $t \in [0,1]$.  We will then look for the measurable\footnote{By definition, the stochastic process $X_t =X_t(\omega)$ is a mapping from $\Omega \times [0,1] \rightarrow \mathbb{R}^n$, where $\Omega$ is a probability space.  By measurable, we mean that this mapping is measurable, with respect to product measure on $\Omega \times [0,1]$; by Fubini's theorem, this implies that the sample paths $t\mapsto X_t$ are measurable almost surely.} stochastic process, $X_t$, with single time marginals law$(X_t)=\mu_t$, that minimizes

\begin{equation*}\tag{$MK_{\infty}$}
E(\int_0^1 \int_0^1 |X_s -X_t|^2dsdt) 
\end{equation*}

After expanding $|X_s -X_t|^2$ and noting that, by Fubini's theorem, $E(\int_0^1 X_t^2dt) = \int_0^1 E(X_t^2)dt = \int_0^1 \int_{\mathbb{R}^n}x^2d\mu_t(x)dt$, for \textit{any} measurable process such that law$X_t = \mu_t$ for all $t$, it is clear that this is equivalent to \textit{maximizing}:

\begin{equation*}
E((\int_0^1 X_tdt)^2)
\end{equation*}

We can think of the function $\int_0^1 \int_0^1 |X_s -X_t|^2dsdt$ as the limit of the Gangbo and Swiech cost.  On the other hand, for a sample path $X_t$, the integral $\int_0^1 X_t dt$ represents the average position of the sample path.  If we think of $X_t$ as representing a particle moving in a quadratic potential, over a time period $t \in [0,1]$, then $(\int_0^1 X_t dt)^2$ is the potential of the average position of the particle.

Our main result, Theorem \ref{character}, asserts existence and uniqueness of an optimizer in $(MK_{\infty})$, as well as a characterization of it, and is the natural generalization of the result of Gangbo and Swiech from finitely many to infinitely many marginals.  Roughly speaking, it says that the random curve $X_t$ is completely dependent, or deterministic; if $X_{t_0}$ is known for one fixed $t_0$, then $X_t$ is known for all $t$ (see Theorem \ref{monge}).

The typical approach to optimal transportation problems (with finitely many marginals) is to develop a duality theory, and then to use the resulting first order conditions to derive structural results about the optimal measure.  Our strategy here is quite different.  A recent paper of Agueh and Carlier relates the multi-marginal problem with Gangbo and Swiech's cost function to barycenters in the Wasserstein space \cite{ac}.  In this paper, we first generalize their results on existence, uniqueness and regularity from barycenters of finitely many points to barycenters of curves.  Having done this, we adapt their relationship between barycenters and multi-marginal problems to our setting and then exploit this connection to deduce the existence and uniqueness of the solution to our problem. 

Barycenters in the Wasserstein space are an interesting topic in their own right.  Barycenters of probability measures on general length spaces have attracted quite a bit of attention recently, in large part because of their relationship to curvature.  In spaces with Alexandrov curvature bounded above, the behaviour of barycenters is fairly well understood; see the work of Sturm \cite{sturm}.  The study of barycenters on spaces with curvature bounded below has recently been initiated by Ohta, and remains in its infancy \cite{ohta}.  It is, however, already apparent that barycenters on spaces with lower curvature bounds are not as well behaved as their counterparts on spaces with upper curvature bounds.  In particular, on spaces with non-positive curvature, each measure admits a unique barycenter, whereas on spaces with non-negative curvature, barycenters may be non-unique.  As an elementary example, every point on the equator is a barycenter of the north and south pole on the unit sphere.  

In addition, the problem of interpolating among several probability measures has begun to arise in applied problems including texture mixing \cite{bdpr} and mathematical economics \cite{ce}\cite{cmn}.  In fact, in \cite{ce}, the authors also consider an extension of their model which involves interpolating among an infinite number of measures. 

It is well known that the Wasserstein space over $\mathbb{R}^n$ does not have non-positive Alexandrov curvature  \cite{ags}.  The work of Agueh and Carlier provides uniqueness and regularity results, as well as a characterization of the barycenter of finitely many points in the Wasserstein space, under certain regularity conditions.  Our first contribution is to extend their uniqueness and regularity result to a continuous curve $\mu_t$ of measures.  It is also worth noting that our techniques here can be used to extend some of the results of Agueh and Carlier to other underlying spaces; in particular, we prove uniqueness of barycenters in the Wasserstein space over a Riemannian manifold.

Finally, let us mention that in a separate paper, we study infinite marginal optimal transportation for somewhat more general cost functions, restricted to the case $n=1$ \cite{P6}.  The techniques used there are quite different than here.

In the next section, we will introduce our hypotheses on the curve of measures, $\mu_t$, as well as two regularity assumptions which will be assumed only at specific points.  In the third section we will study the barycenter of the curve $\mu_t$ proving existence, uniqueness and regularity, as well as demonstrating that the uniqueness result can be extended to other settings.  In section 4, we develop the connection between barycenters and the problem $(MK_{\infty})$ and use this to prove existence and uniqueness of the optimal stochastic process. 
\section{Notation and assumptions}
  
We will denote by $P_2(\mathbb{R}^n)$ the set of all probability measures on $\mathbb{R}^n$ with finite second moments and $P_{ac,2}(\mathbb{R}^n)$ the subset of these which are absolutely continuous with respect to Lebesgue measure. For $\mu, \nu \in P_2(\mathbb{R}^n)$,  $W_2(\mu,\nu)$ denotes the quadratic Wasserstein distance between the measures $\mu$ and $\nu$:

\begin{equation*}
W_2^2(\mu,\nu) = \inf \int_{\mathbb{R}^n \times \mathbb{R}^n}|x-y|^2d\gamma(x,y)
\end{equation*}
where the infimum is taken over all Borel probability measures $\gamma$ on $\mathbb{R}^n \times \mathbb{R}^n$ projecting to $\mu$ and $\nu$, respectively.

Let $M \subseteq \mathbb{R}^n$ be a bounded domain and $P(M) \subseteq P_2(\mathbb{R}^n) $ be the set of all Borel probability measures on $M$.  We will assume that all of our measures $\mu_t$ are supported on $M$; that is, $\mu_t \in P(M)$.  We will denote by $c(M)$ the convex hull of $M$ and $P(c(M))$ the set of Borel probability measures on $c(M)$.  We will assume that $\mu_t$ is a weakly continuous curve in $P(M)$; that is, we assume the mapping $t\mapsto \mu_t$ is a continuous map with respect to the weak topology.  Note that, by the boundedness of $M$, this is equivalent to continuity with respect to the Wasserstein metric.

We now introduce two different regularity conditions on the $\mu_t$, which we will assume at different times.
\newtheorem*{assumption}{Assumption A}

\begin{assumption}
The set 
\begin{equation*}
A := \Big\{t: \mu_t =g_t(x)dx \text{ is absolutely continuous with respect to Lebesgue measure.}\Big\} 
\end{equation*}
has positive Lebesgue measure.

\end{assumption}

\newtheorem*{assumption2}{Assumption B}

\begin{assumption2}
The set 
\begin{equation*}
A_{\infty} := \Big\{t: \mu_t =g_t(x)dx \text{ is absolutely continuous with respect to Lebesgue measure and } ||g_t||_{L^{\infty}} < \infty \Big\} 
\end{equation*}
has positive Lebesgue measure.
\end{assumption2}
 
Note that assumption $B$ easily implies that for some $K < \infty$, the set 
\begin{equation*}
A_K := \Big\{t: \mu_t =g_t(x)dx \text{ is absolutely continuous with respect to Lebesgue measure and } ||g_t||_{L^{\infty}} \leq K\Big\} 
\end{equation*}
has positive Lebesgue measure.

Finally, note that Assumption B clearly implies Assumption A.

\section{Barycenters}
In this section, we study the barycenter of the measures $\mu_t$.  By definition, this is the minimizer of 

\begin{equation*}
\mu \mapsto \int_0^1W_2^2(\mu,\mu_t)dt \tag{$B\infty$}
\end{equation*}
over the set $P_2(\mathbb{R}^n)$.    In this section, we consider existence, uniqueness, and regularity of the barycenter.  We also prove generalizations for other distributions of measures and other underlying spaces. 
\subsection{Existence of the barycenter}
Rather than proving the existence of a barycenter directly, we will, loosely speaking, approximate $(B\infty)$ by 
\begin{equation*}
\mu \mapsto \sum_{i=1}^{N}W_2^2(\mu_{\frac{i}{N}},\mu)\tag{BN}
\end{equation*}
and take the limit as $N$ tends to infinity.  This approach will prove useful later when we establish the regularity of the barycenter.  

\newtheorem{exist}{Proposition}[subsection]
\begin{exist}
A barycenter (a measure $\mu \in P_2(\mathbb{R}^n)$ minimizing $(B\infty)$) exists and it is supported on the convex hull of $M$.
\end{exist}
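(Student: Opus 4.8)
The plan is to use the direct method of the calculus of variations, but the key preliminary step is to reduce the minimization over all of $P_2(\mathbb{R}^n)$ to a minimization over $P(c(M))$, the probability measures on the compact convex hull of $M$. This reduction simultaneously delivers the stated support property and supplies the compactness needed to extract a convergent minimizing sequence. Write $J(\mu) := \int_0^1 W_2^2(\mu,\mu_t)\,dt$ for the functional in $(B\infty)$; I would note first that $t \mapsto W_2^2(\mu,\mu_t)$ is continuous, so the integral is well defined, since $|W_2(\mu,\mu_t)-W_2(\mu,\mu_s)| \le W_2(\mu_t,\mu_s)$ and $\mu_t$ is weakly continuous (equivalently $W_2$-continuous, as $M$ is bounded).

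The crux is the following projection estimate. Let $\rho:\mathbb{R}^n \to c(M)$ denote the nearest-point projection onto the closed convex set $c(M)$; it is $1$-Lipschitz and fixes every point of $c(M)$, so $|\rho(x)-y| \le |x-y|$ for all $x \in \mathbb{R}^n$ and $y \in c(M)$. Given any $\mu \in P_2(\mathbb{R}^n)$ and any $t$, I would take an optimal plan $\gamma$ for $W_2(\mu,\mu_t)$ and push it forward under $(\rho,\mathrm{id})$. Since $\mu_t$ is supported on $M \subseteq c(M)$, this produces an admissible plan between $\rho_\#\mu$ and $\mu_t$ of cost $\int |\rho(x)-y|^2\,d\gamma \le \int |x-y|^2\,d\gamma = W_2^2(\mu,\mu_t)$. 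Hence $W_2^2(\rho_\#\mu,\mu_t) \le W_2^2(\mu,\mu_t)$ for every $t$, and therefore $J(\rho_\#\mu) \le J(\mu)$. Since $\rho_\#\mu \in P(c(M))$, it follows that the infimum of $J$ over $P_2(\mathbb{R}^n)$ equals its infimum over $P(c(M))$, so any minimizer found in $P(c(M))$ is automatically a global minimizer.

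It then remains to run the direct method on $P(c(M))$. Because $c(M)$ is compact, $P(c(M))$ is weakly compact by Prokhorov's theorem, and on this set $W_2$ metrizes weak convergence. Consequently $\mu \mapsto W_2^2(\mu,\mu_t)$ is continuous for each $t$ and bounded above by $\mathrm{diam}(c(M))^2$, so by dominated convergence $J$ is weakly continuous, in particular lower semicontinuous, on $P(c(M))$. A lower semicontinuous functional on a compact set attains its minimum, yielding a minimizer $\bar\mu \in P(c(M))$, which by the previous paragraph minimizes $(B\infty)$ over $P_2(\mathbb{R}^n)$ and is supported on $c(M)$.

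I expect the projection estimate to be the only genuinely substantive point; once the problem is confined to the compact convex hull, compactness and lower semicontinuity are routine. Alternatively, one could obtain $\bar\mu$ as a weak limit of minimizers of the discrete problems $(BN)$, each supported on $c(M)$ by the same projection argument, using uniform convergence of the Riemann sums $\tfrac1N\sum_{i=1}^N W_2^2(\mu_{i/N},\cdot) \to J$ (which holds since $t\mapsto\mu_t$ is uniformly continuous on $[0,1]$ and the integrand is uniformly bounded in $\mu$); this is the route that will be convenient for the later regularity analysis.
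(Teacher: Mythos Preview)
Your argument is correct. The projection step via the metric projection onto $c(M)$ is valid (using $|\rho(x)-y|\le|x-y|$ for $y\in c(M)$, a standard consequence of the variational characterization of the projection onto a closed convex set; strictly one should work with the closure $\overline{c(M)}$, a point the paper also elides), and the direct method on the compact set $P(\overline{c(M)})$ goes through exactly as you describe.

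This is, however, a genuinely different route from the paper's. The paper does not use a projection argument or the direct method at all; instead it invokes Agueh--Carlier's existence result for the finite barycenter problem $(BN)$, obtaining minimizers $\mu^N$ supported in $c(M)$, extracts a weak limit $\mu^\infty$ by Prokhorov, and then passes to the limit in the inequality $\tfrac1N\sum W_2^2(\mu_{i/N},\mu^N)\le\tfrac1N\sum W_2^2(\mu_{i/N},\nu)$ via the triangle inequality and Riemann-sum convergence of $t\mapsto W_2^2(\mu_t,\cdot)$. Your approach is more self-contained (it does not import the finite-marginal existence theorem) and arguably cleaner for the bare existence statement; the paper's discretization route has the advantage, which you correctly anticipate in your final paragraph, of being reused verbatim in the regularity section, where the finite barycenters carry explicit $L^\infty$ bounds that survive the limit.
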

\begin{proof}
The result of  Agueh and Carlier implies the existence of a minimizer $\mu^N$ for $(BN)$, as this is simply the barycenter for the measures $\mu_{\frac{i}{N}}$, $i=1,...N$, with equal weights.  They also prove that it is supported on the set $\sum_{i=1}^N\frac{1}{N}M$, which is contained in the convex hull $c(M)$.

This yields, for all $\nu \in P_2(\mathbb{R}^n)$,

\begin{equation*}
\sum_{i=1}^{N}W^2_2(\mu_{\frac{i}{N}},\mu^N) \leq \sum_{i=1}^{N}W^2_2(\mu_{\frac{i}{N}},\nu)
\end{equation*}
or,
\begin{equation*}
\frac{1}{N}\sum_{i=1}^{N}W^2_2(\mu_{\frac{i}{N}},\mu^N) \leq \frac{1}{N}\sum_{i=1}^{N}W^2_2(\mu_{\frac{i}{N}},\nu)
\end{equation*}
Consider now the sequence $\mu^N$; by Prokhorov's theorem and the tightness of the set $P(c(M))$, we can assume, up to extraction of a subsequence, that $\mu^{N}$ converges weakly.  This implies that $\mu^{N}$ converges in the Wasserstein metric, and so, letting $\mu^{\infty} \in P(c(M))$ be the weak limit, $W_2(\mu^N,\mu^{\infty}) \rightarrow 0$.

Now, by the triangle inequality, for any $\nu \in P_2(\mathbb{R}^n)$, we have

\begin{eqnarray}
\frac{1}{N}\sum_{i=1}^{N}W^2_2(\mu_{\frac{i}{N}},\mu^{\infty}) & \leq& \frac{1}{N}\sum_{i=1}^{N}[W_2(\mu_{\frac{i}{N}},\mu^N) +W_2(\mu^{\infty},\mu^N)]^2\nonumber\\
&=& \frac{1}{N}\sum_{i=1}^{N}W^2_2(\mu_{\frac{i}{N}},\mu^N) + \frac{1}{N}\sum_{i=1}^{N}W^2_2(\mu^{\infty},\mu^N)+2\frac{1}{N}\sum_{i=1}^{N}W_2(\mu^{\infty},\mu^N)W_2(\mu_{\frac{i}{N}},\mu^N)\nonumber\\
&\leq & \frac{1}{N}\sum_{i=1}^{N}W^2_2(\mu_{\frac{i}{N}},\nu) + \frac{1}{N}\sum_{i=1}^{N}W^2_2(\mu^{\infty},\mu^N) +2W_2(\mu^{\infty},\mu^N)\frac{1}{N}\sum_{i=1}^{N}W_2(\mu_{\frac{i}{N}},\mu^N)\nonumber\\
&=& \frac{1}{N}\sum_{i=1}^{N}W^2_2(\mu_{\frac{i}{N}},\nu) + W^2_2(\mu^{\infty},\mu^N) +2W_2(\mu^{\infty},\mu^N)\frac{1}{N}\sum_{i=1}^{N}W_2(\mu_{\frac{i}{N}},\mu^N) \label{last}
\end{eqnarray}
Now, as the compact set $c(M) \subseteq \mathbb{R}^n$ is bounded and  $\mu_{\frac{i}{N}}$ and $ \mu^N$ are supported on $c(M)$, we have, for some $M$, $|x-y|^2<M$, whenever $x \in $spt$(\mu^N)$ and $y \in $spt$(\mu_{\frac{i}{N}})$.  Therefore 
\begin{equation*}
W_2(\mu_{\frac{i}{N}},\mu^N) \leq \sqrt{M}
\end{equation*}
and so the last term in inequality (\ref{last}) is bounded above by 
\begin{equation*}
2W_2(\mu^{\infty},\mu^N)\sqrt{M}.
\end{equation*}
Now, as $N \rightarrow \infty$, $\mu^N \rightarrow \mu^{\infty}$ in the Wasserstein metric, and so the last two terms above tend to $0$.  As the curve $t \mapsto \mu_t$ is continuous with respect to the Wasserstein distance, the mapping $t \mapsto W^2_2(\mu_t,\mu^{\infty})$ is continuous, by the triangle inequality.  Therefore, the quantity on the left hand side of inequality (\ref{last}) tends to the Riemann integral of this curve as $N$ tends to $\infty$.  A similar conclusion holds for the first term on the right hand side, and so, taking the limit of  as $N \rightarrow \infty$ in inequality (\ref{last}) yields:
\begin{equation*}
\int_{0}^1W^2_2(\mu_t,\mu^{\infty})dt \leq \int_{0}^1W^2_2(\mu_t,\nu)dt
\end{equation*}
As this holds for any measure $\nu \in P_2(\mathbb{R}^n)$, this means that $\mu^{\infty}$ is a barycenter.
\end{proof}

\subsection{Uniqueness of the barycenter}
In this section, we establish uniqueness of the barycenter, under Assumption A.

\newtheorem{convexity}{Lemma}[subsection]
\begin{convexity}
Fix $\nu \in P_2(\mathbb{R}^n)$.   The function $P_2(\mathbb{R}^n) \ni \mu \mapsto W_2^2(\nu,\mu)dt$ is convex on $P_2(\mathbb{R}^n)$.  If $\nu$ is absolutely continuous with respect to Lebesgue, it is strictly convex.
\end{convexity}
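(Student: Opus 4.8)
The plan is to exploit the linear (mixing) structure of $P_2(\mathbb{R}^n)$ together with the fact that the transport cost is \emph{linear} in the coupling. For convexity, fix $\lambda \in (0,1)$ and $\mu_0,\mu_1 \in P_2(\mathbb{R}^n)$, and set $\mu_\lambda = (1-\lambda)\mu_0 + \lambda\mu_1$. First I would select optimal couplings $\gamma_0$ and $\gamma_1$ of $\nu$ with $\mu_0$ and $\mu_1$ respectively (measures on $\mathbb{R}^n \times \mathbb{R}^n$ with the indicated marginals realizing $W_2^2(\nu,\mu_0)$ and $W_2^2(\nu,\mu_1)$). The key observation is that the mixture $\gamma_\lambda := (1-\lambda)\gamma_0 + \lambda\gamma_1$ has first marginal $(1-\lambda)\nu + \lambda\nu = \nu$ and second marginal $\mu_\lambda$, so it is an admissible competitor in the Kantorovich problem defining $W_2^2(\nu,\mu_\lambda)$. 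Since $\gamma \mapsto \int|x-y|^2\,d\gamma$ is linear, testing against $\gamma_\lambda$ gives $W_2^2(\nu,\mu_\lambda) \le \int|x-y|^2\,d\gamma_\lambda = (1-\lambda)W_2^2(\nu,\mu_0) + \lambda W_2^2(\nu,\mu_1)$, which is precisely convexity.

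For strict convexity under the assumption that $\nu$ is absolutely continuous, I would analyze the equality case. Suppose $W_2^2(\nu,\mu_\lambda) = (1-\lambda)W_2^2(\nu,\mu_0) + \lambda W_2^2(\nu,\mu_1)$ for some $\lambda \in (0,1)$; the computation above then shows the competitor $\gamma_\lambda$ actually attains $W_2^2(\nu,\mu_\lambda)$, hence is an optimal coupling of $\nu$ and $\mu_\lambda$. Because $\nu$ is absolutely continuous, Brenier's theorem guarantees that the optimal coupling of $\nu$ and $\mu_\lambda$ is unique and concentrated on the graph $y = \nabla\psi(x)$ of the gradient of a convex function, and likewise $\gamma_0 = (\mathrm{id},\nabla\psi_0)_{\#}\nu$ and $\gamma_1 = (\mathrm{id},\nabla\psi_1)_{\#}\nu$ are graphs. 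Disintegrating $\gamma_\lambda$ against its first marginal $\nu$, the conditional measure over $\nu$-a.e. $x$ is simultaneously the single Dirac mass $\delta_{\nabla\psi(x)}$ (from optimality and uniqueness) and the convex combination $(1-\lambda)\delta_{\nabla\psi_0(x)} + \lambda\delta_{\nabla\psi_1(x)}$ (from the mixture). Since a nontrivial convex combination of two Dirac masses is itself a Dirac mass only when the two points coincide, I conclude $\nabla\psi_0(x) = \nabla\psi_1(x)$ for $\nu$-a.e. $x$, whence $\mu_0 = (\nabla\psi_0)_{\#}\nu = (\nabla\psi_1)_{\#}\nu = \mu_1$. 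Thus equality forces $\mu_0 = \mu_1$, i.e. the function is strictly convex.

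The main obstacle is the equality/uniqueness step. Convexity itself is soft, requiring only linearity of the cost and the mixing structure, but strict convexity genuinely uses the absolute continuity of $\nu$ in order to invoke the uniqueness and graphical (Monge) structure of the Brenier coupling. The crux is to pass from ``the mixture $\gamma_\lambda$ is optimal'' to ``the two transport maps coincide a.e.'' through the uniqueness of disintegrations, with some care to ensure that the relevant conditional measures are genuinely Dirac masses rather than merely supported on small sets. Once this identification is made, the conclusion $\mu_0 = \mu_1$ is immediate.
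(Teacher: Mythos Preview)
Your proposal is correct and follows essentially the same approach as the paper: both mix optimal couplings $\gamma_0,\gamma_1$ to get an admissible competitor for $W_2^2(\nu,\mu_\lambda)$, and for strictness both invoke Brenier's theorem to argue that if the mixture $\gamma_\lambda$ were optimal it would have to be concentrated on a single graph, forcing the two Brenier maps to agree $\nu$-a.e. The only cosmetic difference is that the paper phrases the contradiction via supports (if $F_0(x)\neq F_1(x)$ on a set of positive $\nu$-measure then $\gamma_\lambda$ is not graphical), whereas you phrase it via uniqueness of disintegrations; these are the same argument.
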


Note that convexity here does \textit{not} mean displacement convexity in the sense of McCann \cite{m}; instead it means convexity with respect to the usual linear structure on the space of probability measures. This type of convexity is well known, and has been exploited in, for example, \cite{jko}.  To the best of my knowledge, however, the strict convexity has not been explored.

\begin{proof}
Choose two measures $\mu_0$ and $\mu_1$ in $P_2(\mathbb{R}^n)$.  For a fixed $t$, let $\gamma_{i}$ be optimal couplings between $\mu_i$ and $\nu$, for $i =0,1$, respectively.  Now, let $\mu_s = s\mu_1 +(1-s)\mu_0$ and set $\gamma_{s} = s\gamma_{1} +(1-s)\gamma_{0}$.  Note that $\gamma_{s}$ is a coupling of $\mu_s$ and $\nu$.  We then have

\begin{eqnarray}
W_2^2(\nu,\mu_s) &\leq& \int_{\mathbb{R}^n \times \mathbb{R}^n} |x -y|^2d\gamma_{s} \nonumber\\
&=&s\int_{\mathbb{R}^n \times \mathbb{R}^n} |x -y|^2d\gamma_{1} +(1-s)\int_{\mathbb{R}^n \times \mathbb{R}^n} |x -y|^2d\gamma_{0}\nonumber\\
&=& sW_2^2(\nu,\mu_1)+(1-s)W_2^2(\nu,\mu_0)
\end{eqnarray}

This establishes convexity of the function $\mu \mapsto W_2^2(\nu,\mu)$.  We now show that this convexity is strict if $\nu$ is absolutely continuous with respect to Lebesgue measure.  In this case, Brenier's theorem implies the existence of an optimal map $F_s: spt(\nu) \rightarrow spt(\mu_s)$ for each $s$, such that the unique optimal measure $\overline{\gamma_{s}}$ coupling $\nu$ and $\mu_s$ is concentrated on the graph $\{(x,F_s(x)\}$ \cite{bren}. 

Assume now that $\mu_0 \neq \mu_1$ and that $0< s < 1$; we need to show that the inequality above is strict.  Note first that the inequality is strict unless $\gamma_s$ is an \textit{optimal} coupling between $\nu$ and $\mu_s$.  

Now, as $\mu_0 \neq \mu_1$, the set $\{x: F_0(x) \neq F_1(x)\}$ has positive measure.  Note that for each $x$ where $F_0(x) \neq F_1(x)$, the coupling $\gamma_{s}$ \textit{splits} the mass at the point $x$ between $F_0(x)$ and $F_1(x)$; for such $x$, both $(x,F_0(x))$ and $(x,F_1(x))$ belong to the support of the measure $\gamma_{s}$.  On the other hand, the \textit{optimal} measure $\overline{\gamma_{s}}$ coupling $\nu$ and $\mu_s$ is concentrated on the graph of a function $F_s$, and so, for $\nu$ almost all $x$, there is only one point $(x,y)$ in the support of the optimizer (namely $(x,F_s(x))$).  This immediately implies that $\gamma_{s}$ is not the optimal coupling of $\nu$ and $\mu_s$ and so we must have a strict inequality.  This completes the proof.

\end{proof}
The preceding lemma easily implies the following result.
\newtheorem{intconvexity}[convexity]{Lemma}
\begin{intconvexity}
The function $\mu \mapsto \int_0^1W_2^2(\mu_t,\mu)dt$ is convex on $P_2(\mathbb{R}^n)$.  If Assumption A is satisfied, the function is strictly convex.
\end{intconvexity}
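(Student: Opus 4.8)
The plan is to obtain both statements by integrating the pointwise-in-$t$ conclusions of the preceding lemma over $t \in [0,1]$. Fix $\mu_0, \mu_1 \in P_2(\mathbb{R}^n)$ and $s \in [0,1]$, and write $\mu_s = s\mu_1 + (1-s)\mu_0$.

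For convexity, I would apply the first assertion of the preceding lemma with $\nu = \mu_t$ for each fixed $t$, which gives
\begin{equation*}
W_2^2(\mu_t, \mu_s) \leq s\, W_2^2(\mu_t, \mu_1) + (1-s)\, W_2^2(\mu_t, \mu_0)
\end{equation*}
for every $t \in [0,1]$. Since $t \mapsto \mu_t$ is continuous in the Wasserstein metric, each of the three functions of $t$ appearing here is continuous, hence integrable on $[0,1]$; integrating the inequality then yields convexity of $\mu \mapsto \int_0^1 W_2^2(\mu_t, \mu)\, dt$.

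For strict convexity I would assume Assumption A, take $\mu_0 \neq \mu_1$ and $0 < s < 1$, and consider the integrand gap
\begin{equation*}
g(t) := s\, W_2^2(\mu_t, \mu_1) + (1-s)\, W_2^2(\mu_t, \mu_0) - W_2^2(\mu_t, \mu_s).
\end{equation*}
By the convexity inequality above, $g(t) \geq 0$ for all $t$, and by the continuity just noted $g$ is continuous, hence measurable. The key input is the second (strict) assertion of the preceding lemma: whenever $\mu_t$ is absolutely continuous with respect to Lebesgue measure, that is, whenever $t \in A$, the inequality is strict, so $g(t) > 0$. Thus $g$ is a nonnegative function that is strictly positive on the set $A$, which has positive Lebesgue measure by Assumption A. A nonnegative integrable function that is positive on a set of positive measure has positive integral, so $\int_0^1 g(t)\, dt > 0$, which is precisely the strict convexity inequality
\begin{equation*}
\int_0^1 W_2^2(\mu_t, \mu_s)\, dt < s \int_0^1 W_2^2(\mu_t, \mu_1)\, dt + (1-s) \int_0^1 W_2^2(\mu_t, \mu_0)\, dt.
\end{equation*}

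The argument is essentially routine once the preceding lemma is in hand; the only point requiring any care is the final step, namely upgrading a pointwise strict inequality that holds merely on the positive-measure set $A$ to a strict inequality for the integrals. This is exactly where Assumption A enters, and the continuity of $t \mapsto \mu_t$, which guarantees measurability (indeed continuity) of $g$, is what makes the standard measure-theoretic fact applicable. I do not expect any genuine obstacle beyond this observation.
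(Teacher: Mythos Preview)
Your proof is correct and follows essentially the same approach as the paper: apply the preceding lemma pointwise in $t$, observe that under Assumption A the strict inequality holds on the positive-measure set $A$, and integrate. The paper's version is simply terser, omitting the explicit discussion of continuity/measurability of the integrands that you spell out.
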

\begin{proof}
Let $\mu_0, \mu_1 \in P_2(\mathbb{R}^n)$.  The preceding lemma implies that for all $t \in [0,1]$ and all $s \in (0,1)$ we have 

\begin{equation*}
W_2^2(\mu_t,\mu_s) \leq sW_2^2(\mu_t,\mu_1)+(1-s)W_2^2(\mu_t,\mu_0)
\end{equation*}
and the inequality is strict on a subset of $[0,1]$ of positive measure.  Integrating with respect to $t$ yields the desired result.
\end{proof}

This result immediately implies the uniqueness of the barycenter.
\newtheorem{unique}[convexity]{Corollary}
\begin{unique}
Under regularity Assumption A, the barycenter is unique.
\end{unique}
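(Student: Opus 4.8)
The plan is to deduce uniqueness directly from the strict convexity established in the preceding lemma, using the elementary fact that a strictly convex functional attains its minimum at no more than one point. First I would invoke the existence result proved in the previous subsection to guarantee that a barycenter exists, so that the infimum
\[
m := \inf_{\mu \in P_2(\mathbb{R}^n)} \int_0^1 W_2^2(\mu_t,\mu)\,dt
\]
is finite and attained.

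Next I would argue by contradiction. Suppose $\mu_0$ and $\mu_1$ were two \emph{distinct} barycenters, each realizing the value $m$. Since $P_2(\mathbb{R}^n)$ is convex with respect to the usual linear structure on measures, the midpoint $\mu_{1/2} := \tfrac{1}{2}\mu_0 + \tfrac{1}{2}\mu_1$ again belongs to $P_2(\mathbb{R}^n)$ and is therefore an admissible competitor. Applying the strict convexity of the functional from the preceding lemma (valid under Assumption A) with $\mu_0 \neq \mu_1$ and $s = \tfrac{1}{2}$ gives
\[
\int_0^1 W_2^2(\mu_t,\mu_{1/2})\,dt < \tfrac{1}{2}\int_0^1 W_2^2(\mu_t,\mu_1)\,dt + \tfrac{1}{2}\int_0^1 W_2^2(\mu_t,\mu_0)\,dt = m,
\]
which contradicts the minimality of $m$. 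Hence no two distinct minimizers can exist, and uniqueness follows.

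There is essentially no obstacle here: all of the substance lives in the strict convexity lemma, and this corollary is merely the routine passage from strict convexity to uniqueness of the minimizer. The only point deserving a moment's care is that the domain of minimization $P_2(\mathbb{R}^n)$ is itself convex, so that the midpoint remains an admissible competitor --- a fact that is immediate from the linear structure on the space of measures.
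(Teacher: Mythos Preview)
Your proposal is correct and matches the paper's approach exactly: the paper simply states that uniqueness is an immediate consequence of the strict convexity lemma, and what you have written is precisely the standard (and only reasonable) way to spell out that implication. There is nothing to add.
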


\subsection{Regularity of the barycenter}
In this subsection we obtain a regularity result on the barycenter $\mu^{\infty}$, which will be crucial to our construction on the optimal stochastic process in section 4.  Agueh and Carlier \cite{ac} proved the following regularity result for the barycenter of finitely many measures (ie, a minimizer of (BN)); assume that, for at least one $i \in \{1,2,3,...,N\}$, the measure $\mu_{\frac{i}{N}}$ is absolutely continuous with respect to Lebesgue measure, with an $L^{\infty}$ density $g_i$.  Then the barycenter $\mu^N$ is absolutely continuous with an $L^{\infty}$ density $g^N$ and 

\begin{equation*}
||g^N||_{L^{\infty}} \leq N ||g_i||_{L^{\infty}}
\end{equation*}
Our general strategy in this subsection is to approximate  $\mu^{\infty}$ by barycenters $\overline{\mu^N}$ of finitely many measures, much like in subsection 3.1, and then deduce a regularity result (ie, a bound on the $L^{\infty}$ norm of the density) from the regularity of the $\overline{\mu^N}$.  Of course, the bound above tends to infinity as $N$ tends to infinity and so to accomplish this goal we will need a refined regularity result on the barycenters of finitely many measures, with a bound on the $L^{\infty}$ norm which is uniform in $N$.

\newtheorem{normbound}{Proposition}[subsection]
\begin{normbound}\label{normbound}
Let $\mu_i \in P_2(\mathbb{R}^n)$ for $i=1,2,....,N$ and suppose $\mu$ minimizes $\mu \mapsto \sum_{i=1}^N\lambda_iW^2_{2}(\mu,\mu_i)$ on $P_2(\mathbb{R}^n)$, where $0 <\lambda_i<1$ and $\sum_{i=1}^{\infty}\lambda_i=1$.  Let $B \subseteq \{1,2,..,.N\}$ be nonempty and assume that for all $i \in B$, $\mu_i$ is absolutely continuous with respect to Lebesgue measure with an $L^{\infty}$ density $g_i$.   Then $\mu$ is absolutely continuous with respect to Lebesgue measure with an $L^{\infty}$ density $g$ satisfying:

 \begin{equation*}
 ||g||_{L^{\infty}}\leq \Bigg[\sum_{i \in B}\frac{\lambda_i}{||g_i||_{L^{\infty}}^{\frac{1}{n}}}\Bigg]^{-n}
 \end{equation*}
\end{normbound}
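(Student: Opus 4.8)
The plan is to combine the first-order optimality condition satisfied by the barycenter with the concavity of the map $A \mapsto (\det A)^{\frac{1}{n}}$ on the cone of symmetric positive semi-definite matrices (the Minkowski determinant inequality). First I would invoke the regularity theory of Agueh and Carlier to obtain \emph{qualitatively} that $\mu$ is absolutely continuous with a bounded density $g$; their argument goes through for arbitrary weights $\lambda_i$, and the hypothesis that $B$ is nonempty guarantees at least one absolutely continuous marginal with bounded density. Thus the genuinely new content is the sharp, dimension-dependent constant, and the whole matter reduces to an almost everywhere pointwise estimate on $g$.

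Since $\mu$ is absolutely continuous, Brenier's theorem produces, for each $i$, a convex potential $u_i$ whose gradient $T_i = \nabla u_i$ is the optimal map pushing $\mu$ forward to $\mu_i$. I would proceed in three steps. \emph{Step 1 (optimality condition).} Summing the first-order conditions for the individual terms $W_2^2(\mu,\mu_i)$ and using $\sum_i \lambda_i = 1$ gives the Agueh--Carlier identity $\sum_{i=1}^N \lambda_i T_i(x) = x$ for $\mu$-a.e. $x$; differentiating it a.e. (each $u_i$ is twice differentiable a.e. by Alexandrov's theorem) yields $\sum_{i=1}^N \lambda_i DT_i(x) = I$, where each $DT_i(x) = D^2 u_i(x)$ is symmetric positive semi-definite. \emph{Step 2 (change of variables).} For $i \in B$, the Monge--Amp\`ere equation (in the a.e. sense) for the map $T_i$ reads $g(x) = g_i(T_i(x)) \det DT_i(x)$, so that $\det DT_i(x) \geq g(x)/||g_i||_{L^{\infty}}$. \emph{Step 3 (determinant inequality).} By concavity of $(\det)^{\frac{1}{n}}$ and nonnegativity of the discarded terms,
\[
1 = (\det I)^{\frac{1}{n}} = \Big(\det \sum_{i=1}^N \lambda_i DT_i(x)\Big)^{\frac{1}{n}} \geq \sum_{i \in B} \lambda_i \big(\det DT_i(x)\big)^{\frac{1}{n}} \geq g(x)^{\frac{1}{n}} \sum_{i \in B} \frac{\lambda_i}{||g_i||_{L^{\infty}}^{\frac{1}{n}}},
\]
and rearranging gives the claimed bound for a.e. $x$, hence for $||g||_{L^{\infty}}$.

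The step I expect to demand the most care is the simultaneous a.e. justification of the differentiation in Step 1 and of the change of variables in Step 2, since the $u_i$ are a priori only convex and so enjoy merely Alexandrov (second-order a.e.) regularity rather than classical smoothness. I would handle this by restricting to the full-measure set on which every $u_i$ is twice differentiable and the change-of-variables formula for Brenier maps is valid (as established by McCann), noting that $\sum_i \lambda_i \nabla u_i = \mathrm{id}$ may be differentiated a.e. because the right-hand side is smooth and each $\nabla u_i$ is approximately differentiable a.e. One small additional point is that $\det DT_i(x) > 0$ wherever $g(x) > 0$, which is what allows the division in Step 2 and follows from the absolute continuity of both $\mu$ and $\mu_i$. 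Once these a.e. facts are secured, the remaining estimate is purely algebraic and is driven entirely by the Minkowski determinant inequality.
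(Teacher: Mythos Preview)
Your proposal is correct and follows essentially the same route as the paper's proof: invoke the Agueh--Carlier first-order condition $\sum_i \lambda_i \nabla u_i = \mathrm{id}$, differentiate to obtain $\sum_i \lambda_i D^2 u_i = I_n$, apply Minkowski's determinant inequality, drop the terms with $i \notin B$, and substitute the Monge--Amp\`ere relation $\det D^2 u_i = g/g_i(\nabla u_i)$ to conclude. Your discussion of the a.e.\ justification (Alexandrov differentiability, McCann's change-of-variables) is more careful than the paper's, which simply asserts these steps, but the argument is otherwise identical.
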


\begin{proof}
By a result of Agueh and Carlier, (\cite{ac}, Proposition 3.8), for almost all $x$,  $\sum_{i=1}^N \lambda_i D u_i(x) =x$, where $D u_i$ is the Brenier map pushing the Barycenter $\mu^N$ forward to $\mu_i$. As each convex function $u_i$ is twice differentiable almost everywhere, we can differentiate this equation to obtain 
\begin{equation*}
\sum_{i=1}^N \lambda_iD^2u_i(x) =I_n
\end{equation*}
for almost all $x$, where $I_n$ is the $n \times n$ identity matrix.  Taking determinants and $n$-th roots yields:
\begin{equation*}
[\det\sum_{i=1}^N \lambda_iD^2u_i(x)]^{\frac{1}{n}} =1
\end{equation*}
As each $D^2u_i(x)$ is symmetric and positive definite wherever it exists, Minkowski's determinant inequality combined with the preceding equation yields:

\begin{equation*}
\sum_{i=1}^N \lambda_i(\det D^2u_i(x))^{\frac{1}{n}} \leq [\det\sum_{i=1}^N \lambda_iD^2u_i(x)]^{\frac{1}{n}}= 1.
\end{equation*}
As each term $\lambda_i(\det D^2u_i(x))^{\frac{1}{n}}$ is non-negative, we obtain 
\begin{equation*}
\sum_{i \in B}\lambda_i (\det D^2u_i(x))^{\frac{1}{n}} \leq \sum_{i=1}^N \lambda_i(\det D^2u_i(x))^{\frac{1}{n}} \leq 1.
\end{equation*}
 
Now, using the result of Agueh and Carlier, we know that $\mu$ is absolutely continuous, ie $d\mu = g(x)dx$, and it is well known that for each $i \in B$, $u_i$ solves the Monge-Ampere equation almost everywhere, $\det D^2u_i(x) = \frac{g(x)}{g_i(Du_i(x))}$.  Combined with the preceding inequality, this implies

\begin{equation*}
[g(x)]^{\frac{1}{n}} \leq \Bigg[\sum_{i\in B}\frac{\lambda_i}{g_i(Du_i(x))^{\frac{1}{n} }}\Bigg]^{-1}\leq \Bigg[\sum_{i \in B}\frac{\lambda_i}{||g_i||_{L^{\infty}}^{\frac{1}{n}}}\Bigg]^{-1}
\end{equation*}

\end{proof}

\newtheorem{bound}[normbound]{Lemma}
\begin{bound}
Assume Assumption B and let $m_K>0$ be the Lebesgue measure of the set $A_K$.  Then there exists a sequence of measures $\overline{\mu}^N$, absolutely continuous with respect to Lebesgue measure, with densities $\overline{g}^N(x)$ satisfying  $||\overline{g}^N||_{L^{\infty}}\leq \frac{K}{m_K^{n}}$, converging weakly to $\mu^{\infty}$.
\end{bound}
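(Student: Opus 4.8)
The plan is to mimic the existence argument of subsection 3.1, approximating $(B\infty)$ by equally weighted barycenters of finitely many of the $\mu_t$, but now choosing the sampling times adapted to the set $A_K$ so that Proposition \ref{normbound} furnishes a density bound which is uniform in $N$. For each $N$, partition $[0,1]$ into the intervals $I_i^N = [\frac{i-1}{N}, \frac{i}{N})$, $i=1,\dots,N$. In each $I_i^N$ I choose a node $t_i^N$ as follows: if $I_i^N \cap A_K$ has positive Lebesgue measure, pick $t_i^N \in I_i^N \cap A_K$, so that $\mu_{t_i^N}$ is absolutely continuous with $||g_{t_i^N}||_{L^{\infty}} \leq K$; otherwise pick $t_i^N \in I_i^N$ arbitrarily. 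I let $\overline{\mu}^N$ denote the equally weighted barycenter of $\mu_{t_1^N},\dots,\mu_{t_N^N}$, whose existence and support in $c(M)$ follow exactly as in Proposition 3.1.1.

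The density bound is the heart of the matter, and it rests on an elementary covering estimate that circumvents the fact that $A_K$ need not be ``seen'' by a regular grid. Let $B = \{\, i : |I_i^N \cap A_K| > 0 \,\}$. Since $A_K \subseteq \bigcup_{i \in B} I_i^N$ and each $I_i^N$ has length $1/N$, we obtain $m_K = |A_K| \leq |B|/N$, that is $|B| \geq N m_K > 0$; in particular $B$ is nonempty. Applying Proposition \ref{normbound} with weights $\lambda_i = 1/N$ and this set $B$, and using $||g_{t_i^N}||_{L^{\infty}} \leq K$ for $i \in B$, I get
\begin{equation*}
||\overline{g}^N||_{L^{\infty}} \leq \Bigg[ \sum_{i \in B} \frac{1/N}{||g_{t_i^N}||_{L^{\infty}}^{1/n}} \Bigg]^{-n} \leq \Bigg[ \frac{|B|}{N K^{1/n}} \Bigg]^{-n} = \frac{N^n K}{|B|^n} \leq \frac{K}{m_K^n},
\end{equation*}
which is precisely the asserted uniform bound.

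It remains to prove that $\overline{\mu}^N \to \mu^{\infty}$ weakly. By tightness of $P(c(M))$ and Prokhorov's theorem, every subsequence has a further subsequence converging weakly, hence in $W_2$ by boundedness of $c(M)$, to some $\overline{\mu}^{\infty} \in P(c(M))$. I then argue, exactly as in Proposition 3.1.1, that $\overline{\mu}^{\infty}$ minimizes $(B\infty)$: starting from the minimality inequality $\frac{1}{N}\sum_i W_2^2(\mu_{t_i^N}, \overline{\mu}^N) \leq \frac{1}{N}\sum_i W_2^2(\mu_{t_i^N}, \nu)$, the triangle inequality together with the uniform bound on $W_2$ over $c(M)$ controls the cross terms, while the continuity of $t \mapsto W_2^2(\mu_t, \overline{\mu}^{\infty})$ and $t \mapsto W_2^2(\mu_t, \nu)$ ensures that the relevant sums are Riemann sums converging to $\int_0^1 W_2^2(\mu_t, \overline{\mu}^{\infty})\,dt$ and $\int_0^1 W_2^2(\mu_t, \nu)\,dt$. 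The only change from subsection 3.1 is that the nodes are the tags $t_i^N \in I_i^N$ rather than the grid points $i/N$; but since these are admissible tags for a partition of mesh $1/N \to 0$, Riemann-sum convergence for the continuous integrands is unaffected. Passing to the limit gives $\int_0^1 W_2^2(\mu_t, \overline{\mu}^{\infty})\,dt \leq \int_0^1 W_2^2(\mu_t, \nu)\,dt$ for all $\nu$, so $\overline{\mu}^{\infty}$ is a barycenter. Since Assumption B implies Assumption A, the barycenter is unique by Corollary 3.2.3, forcing $\overline{\mu}^{\infty} = \mu^{\infty}$. As every subsequence admits a further subsequence converging to $\mu^{\infty}$, the full sequence converges weakly to $\mu^{\infty}$.

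I expect the \emph{main obstacle} to be exactly the uniform density bound. A naive equally spaced sampling would give only the bound $||g^N||_{L^{\infty}} \leq N ||g_i||_{L^{\infty}}$ of Agueh and Carlier, which blows up with $N$, and even with the refined Proposition \ref{normbound} one must guarantee that a definite proportion of the sampled times lands in $A_K$. The covering estimate $|B| \geq N m_K$ is what makes the bound uniform, and it is robust precisely because it only uses that the intervals meeting $A_K$ in positive measure cover $A_K$, thereby sidestepping the failure of the Riemann sums of the indicator $\mathbf{1}_{A_K}$ to converge for a general measurable set.
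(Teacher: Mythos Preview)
Your proof is correct and follows essentially the same route as the paper: partition $[0,1]$ into $N$ equal intervals, tag those meeting $A_K$ with a point of $A_K$, use the covering estimate $|B|\ge Nm_K$ together with Proposition \ref{normbound} to get the uniform $L^{\infty}$ bound, then argue via Prokhorov and the existence/uniqueness argument that subsequential limits equal $\mu^{\infty}$. The only cosmetic differences are that you require $|I_i^N\cap A_K|>0$ rather than mere nonemptiness (which makes the covering estimate hold only up to a null set, but the measure inequality is unaffected), and you upgrade subsequential convergence to convergence of the full sequence via the standard subsubsequence argument; both are harmless refinements.
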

\begin{proof}
For $i=0,1,....,N-1$, set $I_i = [\frac{i}{N},\frac{i+1}{N}]$.  Let $B_K=\{i:I_i\cap A_K \neq \phi\}$ be subset of indices $i$ for which $I_i$ contains at least one point in the set $A_K$. The union $\bigcup_{i\in B_K }I_i$ clearly covers $A_K$, and so, denoting the size of $B_K$ by $|B_K|$, we must have 
\begin{equation*}
\frac{|B_K|}{N} \geq m_K.
\end{equation*}
Now, we choose $t_i \in I_i$ and approximate the Riemann integral much like in the proof of existence, except that, whenever $i$ is in $B_K$, we choose the point $t_i \in  I_i \cap A_K$, rather than taking $t_i =\frac{i}{N}$.  We define $\overline{\mu}^N$ to be the barycenter of the measures $\mu_{t_i}$, with equal weights $\lambda_i=\frac{1}{N}$; that is the minimizer on $P_2(\mathbb{R}^n)$ of:

\begin{equation*}
\mu \mapsto \frac{1}{N}\sum_{i=1}^{N}W^2_2(\mu_{t_i},\mu)
\end{equation*}

Our result above implies that the barycenter $\overline{\mu^N}$ is absolutely continuous with respect to Lebesgue measure, with a density $\overline{g}^N(x)$ satisfying
\begin{eqnarray*}
 ||\overline{g}^N||_{L^{\infty}} &\leq&\Bigg[\sum_{i \in B_K}\frac{\lambda_i}{||g_{t_i}||_{L^{\infty}}^{\frac{1}{n}}}\Bigg]^{-n}\\
 &\leq& \Bigg[\sum_{i \in B_K}\frac{1}{NK^{\frac{1}{n}}}\Bigg]^{-n}\\
 &=&  \Bigg[\frac{|B_K|}{NK^{\frac{1}{n}}}\Bigg]^{-n}\\
  &=&  \frac{N^nK}{|B_K|^{n}}\\
 &\leq& \frac{K}{m_K^n}
\end{eqnarray*}
Now, up to extraction of a subsequence, $\overline{\mu}^N$, converges weakly by Prokhorov's theorem to some measure $\overline{\mu}^{\infty}$.  Exactly as in the proof of existence, one can prove that $\overline{\mu}^{\infty}$ is a barycenter.  It then follows by the uniqueness result in the last subsection that $\overline{\mu}^{\infty} =\mu^{\infty}$.
\end{proof}
By approximation, we then easily obtain the following regularity result on our barycenter $\mu^{\infty}$.
\newtheorem{infinitlinfty}[normbound]{Corollary}
\begin{infinitlinfty}\label{infinitlinfty}
Under Assumption B, the barycenter is absolutely continuous with respect to Lebesgue measure and its density $g^{\infty}(x)$ satisfies  $||g^{\infty}||_{L^{\infty}}\leq \frac{K}{m_K^{n}}$.
\end{infinitlinfty}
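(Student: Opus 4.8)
The plan is to combine the uniform $L^\infty$ bound furnished by the preceding Lemma with a weak-$*$ compactness argument in $L^\infty$, and then to identify the resulting weak-$*$ limit with the density of the barycenter $\mu^{\infty}$.

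First I would observe that, since every measure $\overline{\mu}^N$ is supported on the fixed bounded set $c(M)$ and its density $\overline{g}^N$ satisfies the uniform bound $||\overline{g}^N||_{L^{\infty}} \leq \frac{K}{m_K^n}$, the sequence $(\overline{g}^N)$ is a bounded sequence in $L^{\infty}(c(M))$. Viewing $L^{\infty}(c(M))$ as the dual of $L^1(c(M))$ and invoking the Banach--Alaoglu theorem, I can extract a subsequence (not relabeled) converging weak-$*$ to some $g \in L^{\infty}(c(M))$. By the weak-$*$ lower semicontinuity of the norm, this limit automatically satisfies $||g||_{L^{\infty}} \leq \liminf_N ||\overline{g}^N||_{L^{\infty}} \leq \frac{K}{m_K^n}$.

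Next I would identify $g$ as the density of $\mu^{\infty}$. For this, fix any $\phi \in C(c(M))$; as $c(M)$ is bounded, $\phi \in L^1(c(M))$, so the weak-$*$ convergence yields $\int \phi\, \overline{g}^N\, dx \to \int \phi\, g\, dx$, that is, $\int \phi\, d\overline{\mu}^N \to \int \phi\, g\, dx$. On the other hand, the Lemma asserts that $\overline{\mu}^N \to \mu^{\infty}$ weakly as measures, so $\int \phi\, d\overline{\mu}^N \to \int \phi\, d\mu^{\infty}$. Since both limits hold for every continuous $\phi$, I conclude that $\int \phi\, d\mu^{\infty} = \int \phi\, g\, dx$ for all $\phi \in C(c(M))$, whence $d\mu^{\infty} = g(x)\,dx$. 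The bound on $g$ established above then gives the claimed estimate on $g^{\infty} = g$.

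The compactness extraction and the bookkeeping are routine; the only point requiring care is the reconciliation of the two notions of convergence at play --- weak convergence of measures (tested against bounded continuous functions) versus weak-$*$ convergence in $L^{\infty}$ (tested against $L^1$ functions). The bridge is precisely the boundedness of $c(M)$, which forces continuous functions to lie in $L^1$ and thus supplies a common class of test functions on which both limits can be compared. I expect this identification to be the main conceptual obstacle, modest though it is.
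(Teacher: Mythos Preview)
Your argument is correct, and it follows a genuinely different route from the paper. The paper works directly at the level of set functions: using the Portmanteau inequality $\mu^{\infty}(A)\le \liminf_N \overline{\mu}^N(A)$ on open sets $A$ together with the uniform bound $\overline{\mu}^N(A)\le \frac{K}{m_K^{n}}|A|$, and then passing from open sets to arbitrary Borel sets by outer regularity of Lebesgue measure. Your approach instead lifts everything to functional analysis: you view the densities as a bounded sequence in $L^{\infty}(c(M))=(L^{1}(c(M)))^{*}$, extract a weak-$*$ limit via Banach--Alaoglu, and identify it with $\mu^{\infty}$ by matching the two modes of convergence on continuous test functions. The paper's argument is slightly more elementary in that it avoids duality and weak-$*$ compactness altogether; yours is cleaner in that absolute continuity and the $L^{\infty}$ bound fall out simultaneously from lower semicontinuity of the dual norm, with no separate outer-regularity step. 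One small point worth making explicit in your write-up: Banach--Alaoglu alone gives only weak-$*$ compactness, and to extract a convergent \emph{subsequence} you are implicitly using that $L^{1}(c(M))$ is separable, which holds because $c(M)$ has finite Lebesgue measure.
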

\begin{proof}
It suffices to prove $\mu^{\infty}(A) \leq \frac{K}{m_K}|A|$ for any Borel set $A \subseteq c(M)$.  If $A$ is open, this follows easily from Lemma \ref{normbound}, as the weak convergence $\overline{\mu}^N \rightarrow \mu^{\infty}$ implies 

\begin{equation*}
\mu^{\infty}(A) \leq \liminf \overline{\mu}^N(A) \leq \frac{K}{m_K}|A|.
\end{equation*}
If $A$ is not open, we may, for any $\epsilon >0$, find an open set $U$ such that $A\subseteq U$ and $|U \setminus A| \leq \epsilon$.  Then we have 

\begin{eqnarray*}
\mu^{\infty}(A) & \leq & \mu^{\infty} (U) \\
&\leq &\frac{K}{m_K}|U| \\
&=&\frac{K}{m_K}( |A|+|U \setminus A|) \\
& \leq &\frac{K}{m_K}|A| +\frac{K}{m_K}\epsilon
\end{eqnarray*}
Taking the limit as $\epsilon \rightarrow 0$ yields the desired result.

\end{proof}

\subsection{Generalization to other spaces and distributions}
The purpose of this subsection is to demonstrate that our approach to uniqueness of the barycenter holds for more general underlying spaces $M$ and more general distributions of measures.  The results of this subsection are not essential to the rest of the paper and can safely be skipped.

For this subsection only, let $(M,g)$ be a compact Riemannian manifold and $P(M)$ denote the set of Borel probability measures on $M$.  Given probability measures $\mu$ and $\nu$ in $P(M)$, the Wasserstein distance between $\mu$ and $\nu$ is defined as in Euclidean space, with the Riemannian distance squared replacing the Euclidean distance:

\begin{equation*}
W_2^2(\mu,\nu) = \inf\int_{M \times M}d^2(x,y)d\gamma(x,y),
\end{equation*}
where the infimum is over all measures $\gamma$ on $M \times M$ projecting to $\mu$ and $\nu$, respectively.
 
Now, let $\Gamma$ be a probability measure on $P(M)$.  A barycenter of $\Gamma$ is a minimizer of 

\begin{equation*}
\mu \mapsto \int W_2^2(\mu,\nu)d\Gamma(\nu).
\end{equation*}

By continuity and Prokhorov's theorem, it is straightforward to verify that a barycenter exists; see, for example, \cite{ohta}.  We note here that our proof of uniqueness relied only on existence and uniqueness of Monge solutions for arbitrary $\mu$ and a set of $\nu$ of positive $\Gamma$ measure.  Let $P_{ac}(M)$ be the set of Borel probability measures on $M$ which are absolutely continuous with respect to local coordinates.  By McCann's theorem \cite{m3}, whenever $\nu \in P_{ac}(M)$, there is a unique optimal map between $\nu$ and $\mu$.   Therefore, we obtain:

\newtheorem{gen}{Theorem}[subsection]
\begin{gen}
Suppose that $P_{ac}(M) \subseteq P(M)$ has positive $\Gamma$ measure.  Then the barycenter of $\gamma$ is unique.
\end{gen}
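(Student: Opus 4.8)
The plan is to reproduce, in the Riemannian setting, the strict-convexity argument that established uniqueness in the Euclidean case, since the only place the flat structure entered was through Brenier's theorem, and that role is now played by McCann's theorem. First I would show that for each fixed $\nu \in P(M)$ the function $\mu \mapsto W_2^2(\nu,\mu)$ is convex with respect to the linear structure on $P(M)$. Given $\mu_0,\mu_1 \in P(M)$, choose optimal couplings $\gamma_0,\gamma_1$ of $\nu$ with $\mu_0,\mu_1$, set $\mu_s = s\mu_1 + (1-s)\mu_0$ and $\gamma_s = s\gamma_1 + (1-s)\gamma_0$. Then $\gamma_s$ is an admissible coupling of $\nu$ and $\mu_s$, and integrating the cost $d^2(x,y)$ against $\gamma_s$ yields
\[
W_2^2(\nu,\mu_s) \leq s\,W_2^2(\nu,\mu_1) + (1-s)\,W_2^2(\nu,\mu_0),
\]
exactly as before; nothing in this computation used the Euclidean metric, only that the cost is $d^2$.

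Next I would upgrade this to \emph{strict} convexity whenever $\nu \in P_{ac}(M)$. By McCann's theorem, for such $\nu$ the optimal coupling of $\nu$ with any $\mu$ is unique and concentrated on the graph of a measurable map. Assuming $\mu_0 \neq \mu_1$ and $0 < s < 1$, the optimal maps $F_0,F_1$ pushing $\nu$ forward to $\mu_0,\mu_1$ must disagree on a set of positive $\nu$-measure (otherwise $\mu_0 = (F_0)_\#\nu = (F_1)_\#\nu = \mu_1$). Over such points the mixture $\gamma_s$ splits the mass between $F_0(x)$ and $F_1(x)$, so its support contains two distinct points above $x$; hence $\gamma_s$ is not concentrated on a graph and therefore cannot equal the unique, graph-concentrated optimal coupling $\overline{\gamma_s}$ of $\nu$ and $\mu_s$. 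This forces the convexity inequality above to be strict. This is precisely the splitting argument of the Euclidean proof, with ``graph of a Brenier map'' replaced by ``graph of a McCann map.''

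Finally I would integrate against $\Gamma$. For every $\nu$ the convexity inequality holds, and for $\nu \in P_{ac}(M)$ it is strict; since $P_{ac}(M)$ has positive $\Gamma$-measure, the strict inequality holds on a set of positive measure, and integrating gives
\[
\int W_2^2(\nu,\mu_s)\,d\Gamma(\nu) < s\int W_2^2(\nu,\mu_1)\,d\Gamma(\nu) + (1-s)\int W_2^2(\nu,\mu_0)\,d\Gamma(\nu)
\]
whenever $\mu_0 \neq \mu_1$ and $0 < s < 1$. Thus the barycenter functional $\mu \mapsto \int W_2^2(\mu,\nu)\,d\Gamma(\nu)$ is strictly convex, and a strictly convex functional admits at most one minimizer; combined with the existence of a barycenter noted above, this gives uniqueness.

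The main obstacle, such as it is, lies not in the convexity bookkeeping but in confirming that McCann's theorem furnishes exactly the two ingredients the splitting argument requires: existence and uniqueness of an optimal coupling concentrated on a graph whenever the source measure is absolutely continuous with respect to local coordinates on $M$. Once these are in hand the remainder is formal, and compactness of $M$ eliminates any integrability difficulties, since all Riemannian distances, and hence all the quantities $W_2^2(\nu,\mu)$, are uniformly bounded; consequently the interpolated measures $\mu_s$ and couplings $\gamma_s$ remain in the relevant spaces with no additional hypotheses.
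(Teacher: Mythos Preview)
Your proposal is correct and is precisely the argument the paper intends: the paper does not write out a formal proof of this theorem but simply remarks that the earlier Euclidean uniqueness proof relied only on existence and uniqueness of Monge solutions from absolutely continuous sources, and that McCann's theorem supplies this on a Riemannian manifold. Your write-up carries out exactly that adaptation, mirroring Lemmas~3.2.1--3.2.2 with Brenier replaced by McCann and the integral over $t$ replaced by the integral against $\Gamma$.
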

Assuming $M$ is a bounded subset of $\mathbb{R}^n$, when $\Gamma$ has finite support, this yields the uniqueness theorem of Agueh and Carlier.  When $\Gamma$ is supported on a Wasserstein continuous curve, we recover our results from a previous section. 
\section{Infinitely many marginals}

\subsection{Construction and basic properties of the optimal process}
We now return to our problem of primary present interest, namely the optimal transportation problem with infinitely many marginals, $(MK_{\infty})$. 

We will use the barycenter from the previous section to construct a stochastic process $X_t^{opt}$.  We will then show that this process is the unique minimizer in $(MK_{\infty})$.

We construct our optimal process $X_t^{opt}$ as follows. 
\newtheorem{process}{Definition}[subsection]
\begin{process}\label{construct}
 We take our underlying probability space to be $\mathbb{R}^n$, with the barycenter $\mu^{\infty}$.  Then taking $D u_t$ to be the Brenier map pushing $\mu^{\infty}$ forward to $\mu_t$, we define a stochastic process by 
 \begin{equation*}
 X_t^{opt} (x)= D u_t(x),
 \end{equation*}
 with the barycenter $\mu^{\infty}$ as the underlying probability space. 
\end{process}
Note that this definition means that the sample paths of the optimal process $ X_t^{opt}$ are $t \mapsto D u_t(x)$, for $x \in c(M)$, with a probability given by $\mu^{\infty}$.
Recall that a stochastic process $Y_t$ is continuous in probability (or in measure) if, for all $t \in [0,1]$ and all $\epsilon >0$.
  
\begin{equation*}
\lim_{s \rightarrow t}\mathbb{P}(|Y_s -Y_t|>\epsilon) =0
\end{equation*}

To prove measurability of $X^{opt}_t$, we will need the following proposition.
\newtheorem{reg2}[process]{Proposition}
\begin{reg2}
The process $X_t^{opt}$ is continuous in probability.
\end{reg2}
\begin{proof}
Recall that the path $\mu_t$ is weakly continuous; this then easily follows from a well known result on the stability of optimal transportation (see, for example, Villani \cite{V2}, Corollary 5.23).
\end{proof}
Recall that a stochastic process $Y_t$ is a \textit{version} of another process $Z_t$ if, for all $t \in [0,1]$, $Y_t = Z_t$ almost surely.  In this case, we say that $Y_t$ and $Z_t$ are \textit{stochastically equivalent}.  It is a well known result that every stochastic process which is continuous in probability has a measurable version, and so the preceding proposition implies:
\newtheorem{meas}[process]{Corollary}

\begin{meas}
The process $X_t^{opt}$ has a measurable version.
\end{meas}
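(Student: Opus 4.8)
The plan is to deduce this from the preceding proposition together with the classical fact that a stochastic process which is continuous in probability on a compact time interval admits a version that is jointly measurable in $(\omega,t)$; since that fact is exactly what the corollary asserts here, I would prove it directly by a dyadic step-process approximation, using only the stochastic continuity of $X^{opt}$ and Fubini's theorem.

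First I would discretize time. For each $k$ set $t^{(k)} := \lfloor 2^k t\rfloor / 2^k$, the left endpoint of the $k$-th dyadic interval containing $t$, and define the step process $X^{(k)}_t := X^{opt}_{t^{(k)}}$. Written out, $X^{(k)}_t(\omega) = \sum_{j} X^{opt}_{j/2^k}(\omega)\,\mathbf{1}_{[j/2^k,(j+1)/2^k)}(t)$, a finite sum of products of a function measurable in $\omega$ with a function measurable in $t$; hence each $X^{(k)}$ is jointly measurable on $\Omega\times[0,1]$, where $\Omega=\mathbb{R}^n$ carries $\mathbb{P}=\mu^{\infty}$.

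Next I would show that $(X^{(k)})$ is Cauchy in measure with respect to the product $\mathbb{P}\otimes\mathrm{Leb}$. Since each $X^{(k)}$ is jointly measurable, Fubini's theorem gives $(\mathbb{P}\otimes\mathrm{Leb})(|X^{(k)}-X^{(l)}|>\epsilon) = \int_0^1 \mathbb{P}(|X^{opt}_{t^{(k)}}-X^{opt}_{t^{(l)}}|>\epsilon)\,dt$. For each fixed $t$ one has $t^{(k)},t^{(l)}\to t$, so by the continuity in probability established in the preceding proposition the integrand tends to $0$ as $k,l\to\infty$; as it is bounded by $1$, dominated convergence forces the integral to $0$. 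Thus $X^{(k)}$ converges in $\mathbb{P}\otimes\mathrm{Leb}$-measure to a jointly measurable limit $Y$, and along a subsequence the convergence is $\mathbb{P}\otimes\mathrm{Leb}$-almost everywhere.

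Finally I would check that $Y$ is a version of $X^{opt}$. By Fubini, $\int_0^1 \mathbb{P}(|X^{(k)}_t - Y_t|>\epsilon)\,dt\to 0$, so after passing to a further subsequence, for Lebesgue-a.e. $t$ one has $X^{(k)}_t\to Y_t$ in probability; since also $X^{(k)}_t = X^{opt}_{t^{(k)}}\to X^{opt}_t$ in probability for every $t$, it follows that $Y_t = X^{opt}_t$ almost surely for a.e. $t$. The main obstacle is upgrading this from almost every $t$ to every $t$ while retaining joint measurability, since the product-measure convergence gives no control at an individual time. I would resolve this exactly through the notion of measurability adopted in the paper, namely measurability with respect to the (completed) product measure, under which redefining $Y$ on the product-null set $\Omega\times\{\text{exceptional }t\}$ to equal $X^{opt}$ there costs nothing; this yields a process that is measurable for $\mathbb{P}\otimes\mathrm{Leb}$ and agrees with $X^{opt}$ almost surely at every $t$, i.e.\ a measurable version.
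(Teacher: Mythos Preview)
Your proposal is correct and is essentially the standard proof of the classical fact the paper invokes. The paper itself gives no argument at all: it simply records that ``every stochastic process which is continuous in probability has a measurable version'' and deduces the corollary immediately from the preceding proposition. Your dyadic step-process approximation is precisely the textbook route to that fact, so in substance you and the paper agree; you have merely supplied the details the paper omits.

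One small remark on your final step. If one wants joint measurability with respect to the (uncompleted) product $\sigma$-algebra rather than its completion, the redefinition trick can be avoided. Stochastic continuity on the compact interval $[0,1]$ upgrades to \emph{uniform} stochastic continuity (a routine compactness argument), so one can pick a single subsequence $(k_j)$ with $\sup_t \mathbb{P}\big(|X^{(k_{j+1})}_t - X^{(k_j)}_t| > 2^{-j}\big) < 2^{-j}$; Borel--Cantelli then gives, for \emph{every} $t$, almost sure convergence of $X^{(k_j)}_t$ to $X^{opt}_t$, and the pointwise limit (set to $0$ where it fails to exist) is product-measurable without any appeal to completion. That said, the paper's footnote speaks of measurability ``with respect to product measure,'' which is naturally read as the completion, so your resolution is also perfectly adequate for the purposes at hand.
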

In light of the preceding corollary, we assume from now on that the process $X_t^{opt}$ is measurable.
\subsection{Proof of optimality}
Our aim is now to prove that the process $X_t^{opt}$ defined in the last subsection is in fact optimal for $(MK_{\infty})$.  As a preliminary step in this direction, we will need to show that the \textit{average measure}, defined by 
\begin{equation*}
\mu^{opt}_a = \text{law}(\int_0^1X_t^{opt} dt) 
\end{equation*}
is in fact the barycenter $\mu^{\infty}$.
\newtheorem{average}{Proposition}[subsection]

\begin{average}
Assume Assumption B.  Then, for the process $X_t^{opt}$ from Definition \ref{construct}, the average measure coincides with the barycenter: $\mu^{opt}_a = \mu^{\infty}$.
\end{average}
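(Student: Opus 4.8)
The plan is to reduce the statement to the Euler--Lagrange (first-order) condition for the barycenter, namely that the \emph{average map} is the identity $\mu^{\infty}$-almost everywhere:
\begin{equation*}
\int_0^1 Du_t(x)\,dt = x \qquad \text{for } \mu^{\infty}\text{-a.e. } x.
\end{equation*}
Indeed, once this is known, the random variable $\int_0^1 X_t^{opt}\,dt$ agrees $\mu^{\infty}$-a.e. with the identity map $x\mapsto x$ on the underlying probability space $(\mathbb{R}^n,\mu^{\infty})$, so its law is the pushforward of $\mu^{\infty}$ under the identity, which is $\mu^{\infty}$ itself; this is exactly $\mu_a^{opt}=\mu^{\infty}$. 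Note that under Assumption B, Corollary \ref{infinitlinfty} guarantees that $\mu^{\infty}$ is absolutely continuous, so each Brenier map $Du_t$ pushing $\mu^{\infty}$ forward to $\mu_t$ is well defined $\mu^{\infty}$-a.e., and the integral makes sense because $X_t^{opt}$ admits a measurable version valued in the bounded set $c(M)$.

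To prove the displayed condition I would argue by perturbation, exploiting that $\mu^{\infty}$ minimizes $(B\infty)$. Fix a bounded measurable vector field $\xi\colon c(M)\to\mathbb{R}^n$ and, for $\epsilon\in\mathbb{R}$, set $\mu_\epsilon:=(\mathrm{id}+\epsilon\xi)_{\#}\mu^{\infty}\in P_2(\mathbb{R}^n)$. Since $(\mathrm{id}+\epsilon\xi,\,Du_t)_{\#}\mu^{\infty}$ is a (generally suboptimal) coupling of $\mu_\epsilon$ and $\mu_t$, I obtain the upper bound
\begin{equation*}
W_2^2(\mu_\epsilon,\mu_t) \le \int_{c(M)} |x+\epsilon\xi(x)-Du_t(x)|^2\,d\mu^{\infty}(x).
\end{equation*}
Expanding the square and recalling that $(\mathrm{id},Du_t)_{\#}\mu^{\infty}$ is the optimal coupling of $\mu^{\infty}$ and $\mu_t$, the right-hand side equals $W_2^2(\mu^{\infty},\mu_t)+2\epsilon\int_{c(M)}\langle\xi,\,x-Du_t\rangle\,d\mu^{\infty}+\epsilon^2\int_{c(M)}|\xi|^2\,d\mu^{\infty}$. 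Integrating in $t$ and using minimality, $\int_0^1 W_2^2(\mu^{\infty},\mu_t)\,dt\le\int_0^1 W_2^2(\mu_\epsilon,\mu_t)\,dt$, so
\begin{equation*}
0 \le 2\epsilon \int_0^1\!\!\int_{c(M)} \langle \xi(x),\,x-Du_t(x)\rangle\,d\mu^{\infty}(x)\,dt + O(\epsilon^2).
\end{equation*}
Dividing by $\epsilon$ and letting $\epsilon\to 0^{+}$, then repeating with $\xi$ replaced by $-\xi$, forces the first variation to vanish for every such $\xi$.

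It then remains to convert the vanishing first variation into the pointwise identity. Writing $h(x):=x-\int_0^1 Du_t(x)\,dt$ and applying Fubini's theorem---legitimate since the integrand is bounded on the bounded set $c(M)$ and $(t,x)\mapsto Du_t(x)$ is measurable---the condition becomes
\begin{equation*}
\int_{c(M)} \langle \xi(x),\ h(x)\rangle\,d\mu^{\infty}(x) = 0 \qquad \text{for every bounded measurable } \xi.
\end{equation*}
Taking $\xi=h\,\mathbf{1}_{\{|h|\le R\}}$ and letting $R\to\infty$ yields $h=0$, i.e. $\int_0^1 Du_t(x)\,dt=x$ for $\mu^{\infty}$-a.e. $x$, which concludes the proof. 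I expect the main subtlety to be measure-theoretic rather than geometric: ensuring that $(t,x)\mapsto Du_t(x)$ is jointly measurable and integrable, so that Fubini applies and $h$ is a well-defined element of $L^2(\mu^{\infty})$; the measurable-version corollary together with the boundedness of $c(M)$ should settle this. An alternative route would be to pass to the limit in the finite-marginal first-order condition $\frac1N\sum_{i=1}^N Du_{t_i}^N(x)=x$ satisfied by the approximating barycenters $\overline{\mu}^N$ of subsection 3.3, but there the obstacle becomes the \emph{simultaneous} limit---combining stability of optimal maps as $\overline{\mu}^N\to\mu^{\infty}$ with convergence of the Riemann sums to $\int_0^1 Du_t\,dt$---which is why I prefer the direct perturbation argument above.
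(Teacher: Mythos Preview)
Your argument is correct and follows the same overall strategy as the paper: both proofs reduce the claim to the Euler--Lagrange condition $\int_0^1 Du_t(x)\,dt = x$ for $\mu^{\infty}$-a.e.\ $x$, obtained by computing the first variation of $(B\infty)$ at the minimizer $\mu^{\infty}$ and then testing against the vector field $h(x)=x-\int_0^1 Du_t(x)\,dt$ itself. The difference lies only in how the first variation is extracted. The paper invokes the Wasserstein differential calculus of Ambrosio--Gigli--Savar\'e on $P_{ac,2}(\mathbb{R}^n)$, which gives the derivative formula directly but restricts the admissible test fields $\xi_0$ to tangent vectors, i.e.\ (limits of) gradients; this is why the paper pauses to observe that $h$ is indeed a gradient, namely $h = D\bigl(\tfrac{|x|^2}{2}-\int_0^1 u_t\,dt\bigr)$. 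Your route is more elementary and self-contained: by using the explicit suboptimal coupling $(\mathrm{id}+\epsilon\xi,\,Du_t)_{\#}\mu^{\infty}$ you get a one-sided inequality valid for \emph{any} bounded measurable $\xi$, so no Wasserstein tangent-space machinery is needed and no gradient structure of $h$ has to be checked. The price is that you must do the $\epsilon\to 0^{\pm}$ argument by hand, but that is straightforward; in fact, since $h$ is already bounded (both $x$ and $Du_t(x)$ lie in the bounded set $c(M)$), your final truncation step with $\mathbf{1}_{\{|h|\le R\}}$ is not even necessary---you may take $\xi=h$ directly.
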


\begin{proof}
By Corollary \ref{infinitlinfty}, the barycenter $\mu^{\infty}$ is absolutely continuous with respect to Lebesgue measure.  Now, for each $t$, the function 
\begin{equation*}
\mu \rightarrow W_2^2(\mu_t, \mu),
\end{equation*}
restricted to the set $P_{ac,2}(\mathbb{R}^n)$ of absolutely continuous measures with finite second moments, is differentiable with respect to the Wasserstein structure on $P_{ac,2}(\mathbb{R}^n)$ \cite{ags}.  This means that given a curve $\mu_s$ in $P_{ac,2}(\mathbb{R}^n)$ with $\mu_0 =\mu^{\infty}$, we have:

\begin{equation*}
\frac{d}{ds}\big|_{s=0} W_2^2(\mu_s,\mu_t) =2\int_{\mathbb{R}^n} <y-D u_t(y), \xi_0 (y)>d\mu^{\infty}(y)
\end{equation*}
where $\xi_s(y)$ is a vector field satisfying $\frac{\partial \mu_s}{ds} + D\cdot(\mu_s \xi_s(y)) =0$; that is, the tangent to $\mu_s$ in $P_{ac,2}(\mathbb{R}^n)$.  Note that we are abusing notation slightly by identifying the measure $\mu_s$ with its density.

Using the dominated convergence theorem, this means that 
\begin{equation*}
\mu \mapsto \int_{0}^1W_2^2(\mu_t, \mu)dt
\end{equation*}
is differentiable on $P_{ac,2}(\mathbb{R}^n)$ as well, and so it's derivative must vanish at the minimizer, $\mu^{\infty}$.  Using the formula for the derivative, we have,

\begin{equation*}
0 = \int_{0}^1\int_{\mathbb{R}^n}<y-D u_t(y), \xi_0(y)>d\mu^{\infty}(y)dt
\end{equation*}
for any tangent vector field $\xi_0$

By Fubini's theorem,

\begin{equation}\label{der}
0 = \int_{\mathbb{R}^n}\Big<\Big(\int_{0}^1y-D u_t(y)dt\Big), \xi_0(y)\Big> d\mu^{\infty}(y)
\end{equation}
Note this holds for all tangent vector fields $\xi_0$ to $P_{ac,2}(\mathbb{R}^n)$ at $\mu^{\infty}$.  As each $u_t$ is a convex function, the integral $v(x) = \int_0^1u_t(x)dt$ is also convex, and again using the dominated convergence theorem we have:
\begin{equation*}
Dv(x) =\int_0^1Du_t(x)dt
\end{equation*}
In particular, we can take $\xi_0(y)=y -\int_0^1Du_t(y)dt =\int_0^1(y -Du_t(y))dt$ in (\ref{der}), to obtain

\begin{equation*}
0 = \int_{\mathbb{R}^n}\Big<\Big(\int_{0}^1y-D u_t(y)dt\Big), \Big(\int_0^1y -Du_t(y)dt\Big)\Big> d\mu^{\infty}(y) = \int_{\mathbb{R}^n}\Big|\Big(\int_{0}^1y-D u_t(y)dt\Big)\Big|^2 d\mu^{\infty}(y)
\end{equation*}
This implies
\begin{equation*}
0 = \int_{0}^1y-D u_t(y)dt
\end{equation*}
$\mu^{\infty}$ almost everywhere.  Therefore, $y \mapsto \int_0^1D u_t(y)dt = \int_0^1X^{opt}_t(y)dt$ is the identity mapping.  As this map pushes $\mu^{\infty}$ forward to $\mu_a$, this immediately implies the desired result. 

\end{proof}

\newtheorem{character}[average]{Theorem}
\begin{character}\label{character}
Assume Assumption B.  Then the $X_t^{opt}$ from Definition \ref{construct} is optimal for $(MK_{\infty})$.  It is the unique optimizer in the sense that, if $Y_t$ is any other optimal process, we have for almost all $t$, $X_t^{opt} = Y_t$, almost surely.
\end{character}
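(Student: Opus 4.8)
The plan is to convert $(MK_\infty)$ into the barycenter problem $(B\infty)$ of Section 3, using the barycenter $\mu^{\infty}$ as the bridge, in the spirit of Agueh and Carlier. As noted in the introduction, for any admissible process $Y_t$ (with $\mathrm{law}(Y_t)=\mu_t$), writing $\bar Y := \int_0^1 Y_t\,dt$ and $C := \int_0^1\int_{\mathbb{R}^n}|x|^2\,d\mu_t(x)\,dt$, the cost equals $2C - 2E[|\bar Y|^2]$, so minimizing it amounts to maximizing $E[|\bar Y|^2]$. The central observation is that the pair $(\bar Y, Y_t)$ is an admissible coupling of $\nu := \mathrm{law}(\bar Y)$ and $\mu_t$, whence $W_2^2(\nu,\mu_t) \le E[|\bar Y - Y_t|^2]$ for every $t$. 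Expanding $|\bar Y - Y_t|^2$, using $\int_0^1 Y_t\,dt = \bar Y$, and invoking Fubini gives $\int_0^1 E[|\bar Y - Y_t|^2]\,dt = C - E[|\bar Y|^2]$; integrating the coupling inequality in $t$ then yields the master estimate
\[
E[|\bar Y|^2] \;\le\; C - \int_0^1 W_2^2(\nu,\mu_t)\,dt \;\le\; C - \int_0^1 W_2^2(\mu^{\infty},\mu_t)\,dt \;=:\; V,
\]
where the last step uses that $\mu^{\infty}$ minimizes $(B\infty)$. This bounds every admissible process by the barycenter value $V$.

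First I would verify that $X_t^{opt}$ attains $V$, giving optimality. By the preceding Proposition the average map $\int_0^1 Du_t\,dt$ is the identity $\mu^{\infty}$-a.e., so $\bar X^{opt}(x)=x$ and in particular $\nu = \mu^{\infty}$, making the second inequality above an equality. Moreover, since $Du_t$ is the Brenier map, the coupling $(\bar X^{opt}, X_t^{opt}) = (x, Du_t(x))$ is the \emph{optimal} coupling of $\mu^{\infty}$ and $\mu_t$, so $W_2^2(\mu^{\infty},\mu_t) = E[|x - Du_t(x)|^2]$ and the first inequality is an equality for every $t$ as well. Hence $E[|\bar X^{opt}|^2] = V$, and $X_t^{opt}$ is optimal.

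For uniqueness, suppose $Y_t$ is any optimal process, so $E[|\bar Y|^2] = V$; then both inequalities in the master estimate must be equalities. Equality in the second forces $\int_0^1 W_2^2(\nu,\mu_t)\,dt = \int_0^1 W_2^2(\mu^{\infty},\mu_t)\,dt$, i.e. $\nu$ is itself a barycenter, so $\nu = \mu^{\infty}$ by the uniqueness Corollary (Assumption A, implied by B). Equality in the first, holding after integration while $W_2^2(\mu^{\infty},\mu_t)\le E[|\bar Y - Y_t|^2]$ holds pointwise, forces $W_2^2(\mu^{\infty},\mu_t) = E[|\bar Y - Y_t|^2]$ for almost every $t$; that is, $(\bar Y, Y_t)$ is an optimal coupling of $\mu^{\infty}$ and $\mu_t$. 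Since $\mu^{\infty}$ is absolutely continuous by Corollary \ref{infinitlinfty}, Brenier's theorem guarantees that this optimal coupling is unique and concentrated on the graph of $Du_t$, whence $Y_t = Du_t(\bar Y) = X_t^{opt}(\bar Y)$ almost surely for almost every $t$. Identifying the underlying probability space through $\bar Y \sim \mu^{\infty}$, this is exactly the assertion that $Y_t = X_t^{opt}$ for almost all $t$, almost surely.

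The main obstacle I anticipate is the bookkeeping needed to pass rigorously between the pointwise-in-$t$ coupling inequality and its integrated form: one must justify the Fubini interchange of $E$ and $\int_0^1 dt$ (licit because all processes are supported in the bounded set $M$, hence uniformly bounded with finite second moments), and then argue that equality of the time-integrals together with the pointwise inequality forces almost-everywhere equality in $t$ — the nonnegative integrable defect $E[|\bar Y - Y_t|^2] - W_2^2(\mu^{\infty},\mu_t)$ integrates to zero and so vanishes a.e. The remaining delicate point is the final identification of $Y_t$ with $X_t^{opt}$, which rests entirely on the uniqueness of the optimal map when the source $\mu^{\infty}$ is absolutely continuous — precisely where Corollary \ref{infinitlinfty}, and hence Assumption B, enters.
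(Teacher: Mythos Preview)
Your proposal is correct and follows essentially the same route as the paper: reduce to maximizing $E[|\bar Y|^2]$, use that $(\bar Y,Y_t)$ is a coupling to get the chain $E[|\bar Y|^2]\le C-\int_0^1 W_2^2(\nu,\mu_t)\,dt\le C-\int_0^1 W_2^2(\mu^{\infty},\mu_t)\,dt$, verify equality for $X_t^{opt}$ via the preceding Proposition, and then for any optimizer force $\nu=\mu^{\infty}$ (barycenter uniqueness) and $(\bar Y,Y_t)$ optimal for a.e.\ $t$, concluding $Y_t=Du_t(\bar Y)$ by Brenier and Corollary~\ref{infinitlinfty}. The only cosmetic difference is that you explicitly check $X_t^{opt}$ saturates the bound, whereas the paper leaves this implicit in the ``equality iff'' clause; your discussion of the Fubini and a.e.\ defect steps is more detailed than the paper's but in the same spirit.
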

\begin{proof}
It is clear from the construction that law$X_t^{opt} = \mu_t$ for all t.
Now, take any stochastic process $Y_t$, such that law$(Y_t)=\mu_t$.  Denote by $\mu_{a}$ the law of the random variable $Y_a=\int_0^1Y_tdt$.  We will denote by $\mu_{t,a}$ the law of the ordered pair $(Y_t,Y_a)$ on $\mathbb{R}^n \times \mathbb{R}^n$; note that this implies that $\mu_t$ and $\mu_a$ are the marginals of $\mu_{t,a}$. 
Now, note that:

\begin{eqnarray*}
E\Big(\int_0^1\big(|Y_t -\int_0^1Y_sds|^2\big)dt\Big) &=& E\Big(\int_0^1|Y_t|^2dt -2\int_0^1Y_tdt\int_0^1Y_sds + \big|\int_0^1Y_sds\big|^2\Big)\\
&=& E\Big(\int_0^1|Y_t|^2dt - \big|\int_0^1Y_sds\big|^2\Big)\\
&=&\int_0^1E|Y_t|^2dt -E\big|\int_0^1Y_sds\big|^2\\
&=&\int_0^1\int_{\mathbb{R}^n}|Y_t|^2d\mu_tdt-E\big|\int_0^1Y_sds\big|^2\\
\end{eqnarray*}
Note that the first term above depends only on law$(Y_t) = \mu_t$.  Therefore, maximizing $E(|\int_0^1Y_sds|^2)$ subject to the constraint law$(Y_t) = \mu_t$ is equivalent to minimizing $E\Big(\int_0^1\big(|Y_t -\int_0^1Y_sds|^2\big)dt\Big)$, subject to the same constraint.  

Now, we have, using Fubini,
\begin{eqnarray*}
E\Big(\int_0^1(|Y_t -\int_0^1Y_sds|)dt\Big) &=&\int_0^1\Big(E\big(|Y_t -\int_0^1Y_sds|^2\big)\Big)dt\\
&=&\int_0^1\Big(\int_{\mathbb{R}^n \times \mathbb{R}^n}|Y_t -Y_a|^2d\mu_{t,a}\Big)dt\\
& \geq & \int_0^1 W^2_2(\mu_t, \mu_a)dt\\ 
& \geq & \int_0^1 W^2_2(\mu_t, \mu^{\infty})dt\\ 
\end{eqnarray*}

Observe that we have equality if and only if 
\begin{enumerate}
\item $\mu_a$ is the barycenter $\mu^{\infty}$ of the $\mu_t$'s and, 
\item for almost all $t$, the measure $\mu_{t,a}$ is the optimal coupling between $\mu_t$ and $\mu_a$.
\end{enumerate}

Now, assuming the first condition,  $\mu_a = \mu^{\infty}$ is absolutely continuous with respect to Lebesgue measure by Corollary \ref{infinitlinfty}, and so the optimal coupling $\mu_{t,a}$ is concentrated on the graph of the function $x \mapsto D u_t(x)$. Therefore, these two conditions imply that the sample path $Y_t$ is completely determined almost surely by $Y_a$, which is distributed according to the barycenter.  We can therefore take the underlying probability space to be $\mu^{\infty}$ and the second condition implies that the process $Y_t =X_t^{opt}$ almost surely, for almost all $t$.
\end{proof}

We can obtain a more elegant uniqueness result if we restrict our attention to stochastic processes which are continuous in probability;  the following theorem implies that $X_t^{opt}$ is the unique, continuous in probability maximizer for ($MK_{\infty}$), modulo stochastic equivalence.

\newtheorem{uniquereg}[average]{Theorem}
\begin{uniquereg}
Assume Assumption B and suppose $Y_t$ is optimal for ($MK_{\infty}$) and $Y_t$ is continuous in probability.  Then $Y_t$ is a version of $X_t^{opt}$.
\end{uniquereg}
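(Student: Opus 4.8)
The plan is to bootstrap the almost-every-$t$ coincidence supplied by Theorem \ref{character} up to an every-$t$ coincidence, exploiting that both processes are continuous in probability. The guiding observation is that a limit in probability is unique modulo almost sure equality; hence two processes that agree on a dense set of times and are each continuous in probability must in fact agree at every time.

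First I would fix a common probability space on which the comparison is meaningful. By Theorem \ref{character}, any optimal $Y_t$ has average $Y_a = \int_0^1 Y_t\,dt$ distributed according to $\mu^{\infty}$, with its whole sample path determined almost surely by $Y_a$; realizing $Y_t$ on $(\mathbb{R}^n,\mu^{\infty})$ in this way, the theorem furnishes a set $T \subseteq [0,1]$ of full Lebesgue measure such that $Y_t = X_t^{opt}$ holds $\mu^{\infty}$-almost surely for every $t \in T$. Since $T$ has full measure it is dense, so for an arbitrary fixed $t_0 \in [0,1]$ I can select $t_n \in T$ with $t_n \to t_0$.

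Next I would invoke continuity in probability of both processes: the earlier proposition gives that $X_t^{opt}$ is continuous in probability, and $Y_t$ is continuous in probability by hypothesis, so $X_{t_n}^{opt} \to X_{t_0}^{opt}$ and $Y_{t_n} \to Y_{t_0}$, both in probability. But $X_{t_n}^{opt} = Y_{t_n}$ almost surely for each $n$, so these are literally the same sequence of random variables, converging in probability to both $X_{t_0}^{opt}$ and $Y_{t_0}$. Uniqueness of the limit in probability then forces $X_{t_0}^{opt} = Y_{t_0}$ almost surely, since for every $\epsilon>0$ one has $\mathbb{P}(|X_{t_0}^{opt}-Y_{t_0}|>\epsilon) \leq \mathbb{P}(|X_{t_0}^{opt}-X_{t_n}^{opt}|>\epsilon/2) + \mathbb{P}(|Y_{t_n}-Y_{t_0}|>\epsilon/2) \to 0$. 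As $t_0$ was arbitrary, $Y_t = X_t^{opt}$ almost surely for every $t$, i.e. $Y_t$ is a version of $X_t^{opt}$.

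The main obstacle is not the limiting argument, which is routine, but the bookkeeping needed to put $Y_t$ and $X_t^{opt}$ on the same probability space so that the statement ``$Y_t = X_t^{opt}$ almost surely'' is even meaningful. This rests on the structural content of Theorem \ref{character}: that $\mu_a = \mu^{\infty}$ is absolutely continuous (Corollary \ref{infinitlinfty}) and that the optimal coupling $\mu_{t,a}$ concentrates on the graph of $Du_t$, which together let one identify the sample space of $Y$ with $(\mathbb{R}^n,\mu^{\infty})$ through $\omega = Y_a$. A secondary point to verify is that continuity in probability is insensitive to passing between stochastically equivalent versions, so that the conclusion is genuinely a statement about versions and is unaffected by our earlier replacement of $X_t^{opt}$ by a measurable version.
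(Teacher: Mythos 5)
Your proposal is correct and follows essentially the same route as the paper: invoke Theorem \ref{character} to get $Y_t = X_t^{opt}$ a.s.\ for a.e.\ $t$, pick $t_n \to t_0$ within that full-measure set, and conclude by continuity in probability of both processes together with uniqueness of limits in probability. Your additional remarks on realizing $Y_t$ on $(\mathbb{R}^n,\mu^{\infty})$ and the explicit $\epsilon/2$ estimate simply make explicit what the paper leaves implicit.
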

\begin{proof}
From our previous uniqueness result, we know that $Y_t = X_t^{opt}$ almost surely, for almost all $t$.  We need to prove this for \textit{all} t.  

Fix $t_0 \in [0,1]$.  Then we can choose a sequence $t_i$ converging to $t_0$ such that $Y_{t_i} = X_{t_i}$ almost surely.  By continuity in probability, $Y_{t_i}$ converges to $Y_{t_0}$ in probability and $X_{t_i}$ converges to 
$X_{t_0}$ in probability.  This immediately implies $X_{t_0} = Y_{t_0}$ almost surely, as desired.
\end{proof}

We now prove an analogue of Brenier's theorem \cite{bren} (for two marginal problems) and the result of Gangbo and Swiech \cite{GS} (for several marginals).  In our context, it is natural to interpret this result as saying that we can take the underlying probability space of our stochastic process to be $M \subseteq \mathbb{R}^n$.

\newtheorem{monge}[average]{Theorem}
\begin{monge}\label{monge}
(Monge solutions)  Assume Assumption B holds and suppose $\mu_{t_0} \in P_{ac}(M)$ .  Then we can take $\mu_{t_0}$ to be the underlying probability space of the unique optimal process $X_t^{opt}$.  That is, the optimal process can be written as $X_t^{opt} =F_t(X^{opt}_{t_0})$, where, for each $t$,  $F_t:\mathbb{R}^n \rightarrow \mathbb{R}^n$ is a mapping pushing $\mu_{t_0}$ forward to $\mu_{t}$, and $F_{t_0}$ is the identity mappings.
\end{monge}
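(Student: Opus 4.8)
The plan is to change the underlying probability space of $X_t^{opt}$ from the barycenter $\mu^{\infty}$ to $\mu_{t_0}$ by composing with the inverse of the Brenier map $Du_{t_0}$. The essential point is that \emph{both} measures are absolutely continuous: $\mu^{\infty}$ by Corollary \ref{infinitlinfty} (which applies under Assumption B), and $\mu_{t_0}$ by hypothesis. First I would recall the standard consequence of Brenier's theorem that, when $\mu^{\infty}$ and $\mu_{t_0}$ are both absolutely continuous, the optimal map $Du_{t_0}$ pushing $\mu^{\infty}$ forward to $\mu_{t_0}$ is invertible: its inverse is the optimal map pushing $\mu_{t_0}$ forward to $\mu^{\infty}$, namely the gradient $Du_{t_0}^*$ of the Legendre transform $u_{t_0}^*$ of $u_{t_0}$. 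These maps satisfy $Du_{t_0}^* \circ Du_{t_0} = \mathrm{id}$ for $\mu^{\infty}$-almost every $x$ and $Du_{t_0}\circ Du_{t_0}^* = \mathrm{id}$ for $\mu_{t_0}$-almost every $y$.

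Next I would define, for each $t$, the map
\begin{equation*}
F_t := Du_t \circ Du_{t_0}^*,
\end{equation*}
and verify the two properties claimed in the statement. Since $Du_{t_0}^*$ pushes $\mu_{t_0}$ forward to $\mu^{\infty}$ and $Du_t$ pushes $\mu^{\infty}$ forward to $\mu_t$, the composition $F_t$ pushes $\mu_{t_0}$ forward to $\mu_t$; and $F_{t_0} = Du_{t_0}\circ Du_{t_0}^*$ is the identity $\mu_{t_0}$-almost everywhere. To see that this realizes $X_t^{opt}$ over the base $\mu_{t_0}$, I would compose along sample paths: for $\mu^{\infty}$-almost every $x$,
\begin{equation*}
F_t\big(X_{t_0}^{opt}(x)\big) = Du_t\big(Du_{t_0}^*(Du_{t_0}(x))\big) = Du_t(x) = X_t^{opt}(x),
\end{equation*}
using $Du_{t_0}^*\circ Du_{t_0} = \mathrm{id}$ $\mu^{\infty}$-a.e.\ together with $X_{t_0}^{opt} = Du_{t_0}$. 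This is precisely the assertion $X_t^{opt} = F_t(X_{t_0}^{opt})$.

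To justify the phrase ``we can take $\mu_{t_0}$ to be the underlying probability space,'' I would observe that $\Phi := Du_{t_0}$ is, up to null sets, a measure isomorphism from $(\mathbb{R}^n,\mu^{\infty})$ onto $(\mathbb{R}^n,\mu_{t_0})$, with inverse $\Phi^{-1} = Du_{t_0}^*$. Hence the process $\tilde X_t := F_t = X_t^{opt}\circ \Phi^{-1}$, defined over $(\mathbb{R}^n,\mu_{t_0})$, has the same finite dimensional distributions as $X_t^{opt}$; it is therefore optimal for $(MK_{\infty})$ and a version of $X_t^{opt}$ in the sense of Theorem \ref{character}. Its measurability as a process follows either from continuity in probability, exactly as established above for $X_t^{opt}$, or directly, since it is the composition of the measurable process $X_t^{opt}$ with the fixed measurable map $\Phi^{-1}$.

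I expect the main obstacle to be the careful measure-theoretic bookkeeping behind the invertibility of $Du_{t_0}$: one must guarantee that the two compositions equal the identity on full-measure sets \emph{for the correct measures}, so that the $\mu^{\infty}$-a.e.\ identity $Du_{t_0}^*\circ Du_{t_0} = \mathrm{id}$ may legitimately be inserted into the sample-path relation $X_{t_0}^{opt} = Du_{t_0}$ without destroying the almost-sure conclusion. This hinges entirely on the absolute continuity of \emph{both} $\mu^{\infty}$ and $\mu_{t_0}$, which is exactly why Assumption B (through Corollary \ref{infinitlinfty}) and the hypothesis $\mu_{t_0}\in P_{ac}(M)$ are invoked.
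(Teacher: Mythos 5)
Your proposal is correct and follows essentially the same route as the paper: invert the Brenier map $Du_{t_0}$ via the gradient of its Legendre transform $Du_{t_0}^{*}$ and set $F_t = Du_t \circ Du_{t_0}^{*}$. The additional measure-theoretic bookkeeping you supply (tracking on which full-measure sets each composition equals the identity) is a welcome elaboration of what the paper leaves implicit, but it is not a different argument.
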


\begin{proof}
As  $\mu_{t_0}$ does not charge small sets, Brenier's theorem implies that the optimal map $Du_{t_0}$ pushing the barycenter $\mu^{\infty}$ forward to $\mu_{t_0}$ is invertible almost everywhere; its inverse is $Du^{*}_{t_0}$, where $u^{*}_{t_0}$ is the Legendre transform of $u$.   We then have $X_t^{opt}(x) = D u_t(x)=D u_t(D u^*_{t_0}(z))$, where $x$ is distributed according to $\mu^{\infty}$ and $z =D u_{t_0}(x)$ is distributed according to $\mu_{t_0}$. 
Taking $F_t =D u_t\circ D u^{*}_{t_0}$ yields the desired result.
\end{proof}
This result means that the stochastic process $X_t^{opt}$ is \textit{deterministic}, in the sense that if we know $X^{opt}_{t_0}$, we know $X_t^{opt}$ for all $t$.

\bibliographystyle{plain}
\bibliography{biblio}

\end{document}